\newtheorem{definition}{Definition}[section]
\newtheorem{lemma}{Lemma}[section]
\newtheorem{thm}{Theorem}[section]
\newtheorem{prop}{Proposition}[section]
\newtheorem{remark}{Remark}[section]
\numberwithin{equation}{section}
\newcommand{\veps}{\varepsilon}
\def\tr{\textmd{tr}}
\def\Ric{\textmd{Ric}}
\def\R{\mathbb{R}}
\def\tg{\widetilde{g}}
\def\R{\mathbb{R}}
\def\S{\Sigma}
\def\({\left(}
\def\){\right)}
\def\={\stackrel{(n=2)}{=}}
\def\p{\partial}
\def\C{\mathcal{C}}
\def\Gd{\mathcal{G}_\delta}
\def\Hd{\mathcal{H}_\delta}
\newcommand{\be}{\begin{equation}}
\newcommand{\ee}{\end{equation}}
\newcommand{\bee}{\begin{equation*}}
\newcommand{\eee}{\end{equation*}}
\newcommand{\m}{\mathfrak{m}}
\begin{document}

	\title[Gluing Bartnik extensions]{Gluing Bartnik extensions, continuity of the Bartnik mass, and the equivalence of definitions}
\author[McCormick]{Stephen McCormick}
\address{Matematiska institutionen, Uppsala universitet, 751 06 Uppsala, Sweden.}
\email{stephen.mccormick@math.uu.se}
	\maketitle

\begin{abstract}
	 In the context of the Bartnik mass, there are two fundamentally different notions of an extension of some compact Riemannian manifold $(\Omega,\gamma)$ with boundary. In one case, the extension is taken to be a manifold without boundary in which $(\Omega,\gamma)$ embeds isometrically, and in the other case the extension is taken to be a manifold with boundary where the boundary data is determined by $\p\Omega$.
	 
	 We give a type of convexity condition under which we can say both of these types of extensions indeed yield the same value for the Bartnik mass. Under the same hypotheses we prove that the Bartnik mass varies continuously with respect to the boundary data. This also provides a method to use estimates for the Bartnik mass of constant mean curvature (CMC) Bartnik data, to obtain estimates for the Bartnik mass of non-CMC Bartnik data. The key idea for these results is a method for gluing Bartnik extensions of given Bartnik data to other nearby Bartnik data.
\end{abstract}

\section{Introduction}\label{S-intro}
The problem of quasi-local mass in general relativity is the problem of measuring the total mass -- including gravitational energy -- contained in a region of finite extent. There are many proposed definitions of quasi-local mass, each with their own advantages and disadvantages. The definition under consideration in this article is that due to Bartnik \cite{Bartnik-89}, which is generally considered to be one of the most likely definitions to give a true measure of the mass according to physics. The problem with the Bartnik mass is that its abstract definition makes it incredibly difficult to work with directly.

The Bartnik mass is effectively a localisation of the ADM mass, which is the well-known definition of the mass of an asymptotically flat manifold. The reader is reminded of the definition of the ADM mass and that of asymptotic flatness at the end of this section. 

A slightly informal definition of Bartnik's quasi-local mass is given as follows. It is made more precise later by our discussions of what it means to be an `admissible extension'.

\begin{definition}[Informal]
	Let $(\Omega,\gamma)$ be a compact manifold with boundary $\p\Omega$, and nonnegative scalar curvature. The \emph{Bartnik mass} of $(\Omega,\gamma)$ is then defined as
	\be \label{eq-defn-informal}
		\m_B(\Omega,\gamma):=\inf\{\m_{ADM}(M,\widehat{\gamma}):(M,\widehat\gamma)\text{ is an admissible extension of } (\Omega,\gamma) \},
	\ee 
	where $\m_{ADM}$ is the ADM mass of $(M,\widehat\gamma)$. 
\end{definition}

The primary difficulty in computing the Bartnik mass lies in the fact that it is taken to be an infimum over a space of manifolds; an infimum that may or may not be realised. In order to make the definition precise, we also must discuss what constitutes an \emph{admissible extension} in this context.

Intuitively, an admissible extension $(M,\widehat\gamma)$ of $(\Omega,\gamma)$ must satisfy three properties:
\begin{enumerate}[(i)]
	\item $(M,\widehat{\gamma})$ must in some meaningful way, extend $(\Omega,\gamma)$ to an asymptotically flat manifold containing it;
	\item $(M,\widehat{\gamma})$ must satisfy some version of the positive mass theorem, as to not introduce negative mass somewhere in the exterior;
	\item  $(M,\widehat{\gamma})$ must satisfy some kind of non-degeneracy condition to ensure the mass is not always trivially zero.
\end{enumerate}
To illustrate the motivation and necessity of point (iii), consider some $(\Omega,\gamma)$ that isometrically embeds into an asymptotically flat manifold $(M,\widehat{\gamma})$, inside of a horizon of very small area. In this case, $\Omega$ cannot be `seen' by infinity, and could therefore in principle have arbitrarily small mass if the horizon is very small. A common non-degeneracy condition to enforce -- indeed the original one that Bartnik suggested -- is that $(M,\widehat{\gamma})$ cannot contain any minimal surfaces containing $\Omega$.

Another common non-degeneracy condition that is slightly stronger than the above one, is the condition that $\p\Omega$ be outer-minimising in $(M,\widehat{\gamma})$. That is, $\p\Omega$ has the least area among all competing surfaces enclosing it. This condition is particularly useful to impose on the set of admissible extensions, as in this case the Bartnik mass is bound from below by the Hawking mass of $\p\Omega$; a property which follows from Huisken and Ilmanen's proof of the Riemannian Penrose inequality \cite{H-I01}.

There are also variations of the Bartnik mass that involve different definitions in regard to properties (i) and (ii). These differences are what we address in this article. The original definition of Bartnik was that an admissible extension should be an asymptotically flat manifold without boundary, in which $(\Omega,\gamma)$ isometrically embeds and satisfies the non-degeneracy condition. However, there is no reason to expect that if the infimum in \eqref{eq-defn-informal} is realised that the minimiser is smooth. In fact, one expects that such a minimising manifold is in general only Lipschitz continuous along $\p\Omega$. One could then say that condition (i) above, could simply be that an extension $(M,\widehat{\gamma})$ of $(\Omega,\gamma)$ is an asymptotically flat manifold with boundary $\p M$ such that the induced metric on $\p M$ is isometric to $\gamma_{\p\Omega}$, so that $\p M$ can be identified with $\p \Omega$ to obtain a Lipschitz manifold. In this case, the appropriate positive mass theorem for condition (ii) is the positive mass theorem with corners (see Theorem \ref{thm-PMTcorner} below). This says that the mass is positive if, in addition to the usual condition that the scalar curvature is nonnegative, the mean curvature (pointing towards infinity) with respect to $\widehat{\gamma}$ induced on $\p M$ is no larger than the (outward) mean curvature of $\p \Omega$ with respect to $\gamma$. This then leads to the Bartnik mass often being assigned to a triple $(\S,g,H)$, consisting of a closed $2$-manifold $\S$ equipped with a Riemannian metric $g$ and a positive function $H$ (the reader is directed to \cite{Bartnik-TsingHua} for a discussion on this point). If $\S$ is the boundary of some compact Riemannian manifold $(\Omega,\gamma)$, $g$ is the induced metric on $\S$ and $H$ is the outward mean curvature of $\S$, then it is conjectured that the Bartnik mass defined in terms of the boundary geometry is the same as that defined by considering smooth embeddings of $(\Omega,\gamma)$ into asymptotically flat manifolds. In Section \ref{S-defns} we discuss this in more detail, and Theorem \ref{thm-defns} therein gives a condition under which both these definitions do indeed agree.

Under the same hypotheses, two additional related results on the Bartnik mass are given. We prove that the Bartnik mass is continuous with respect to $(g,H)$ in the $C^{2,\tau}\times C^2$ topology, and we show how estimates of the Bartnik mass of constant mean curvature (CMC) surfaces yield estimates of the Bartnik mass in the non-CMC case.

These results follow from the rather simple idea of taking an admissible extension of some $(\Omega,\gamma)$ and gluing in a small collar manifold at the boundary in order to slightly change the boundary geometry. In order to prove the equivalence of the different definitions of Bartnik mass, we simply show that from one kind of admissible extension, we can construct an extension that is admissible in another sense, while controlling the ADM mass.
The idea behind proving continuity is similar.

In Section \ref{S-collar}, we introduce the collars that are used to connect nearby metrics on closed $2$-manifolds. In Section \ref{S-smoothing} we provide tools to locally smooth manifolds with corners, in order to smoothly glue collars to asymptotically flat extensions. Then in Section \ref{S-defns}, we use these tools to give conditions under which three different definitions of Bartnik mass yield the same value. Finally, in Section \ref{S-Cont}, we prove that under these same conditions the Bartnik mass is continuous with respect to the boundary data.

It should be remarked that after completing the first version of this article, the author was made aware that Jauregui had independently established several of these results \cite{J18}. In particular, Jauregui is able to show the equivalence between different notions of extensions without the `convexity condition' required for our $\veps$-collar connections, used throughout.

Before continuing, we briefly take a moment to remind the reader of some definitions. We also remark here that we adhere to a convention of using $g$ for metrics on a surface and $\gamma$ for metrics on a 3-manifold.
\begin{definition}
	A Riemannian $3$-manifold $(M,\gamma)$ is said to be \emph{asymptotically flat}, if there exists a compact set $K$ such that $M\setminus K$ is diffeomorphic to $\R^3$ minus a closed ball, satisfying appropriate decay conditions on the metric. Specifically, in the standard Cartesian co-ordinates coming from such a diffeomorphism, we ask that the metric satisfy
	\bee 
		|\gamma-\delta|+r|\p \gamma|+r^2|\p^2 \gamma|=O(r^{-\tau}),
	\eee 
	where $\delta$ is the usual flat metric and $\tau>\frac12$. Furthermore, we also assume that the scalar curvature $R(\gamma)$ is in $L^1(M)$.
\end{definition}
Usually an asymptotically flat manifold is permitted to have several asymptotic `ends', each diffeomorphic to a copy of $\R^3$ minus a closed ball. However, as we do not need to consider multiple ends here, we omit reference to them for the sake of simplicity. Asymptotic flatness permits us to define a physically meaningful total mass of the manifold, due to Arnowitt, Deser and Miser \cite{ADM}.

\newpage

\begin{definition}
	The \emph{ADM mass} of an asymptotically flat manifold $(M,\gamma)$ may be computed using the standard Cartesian coordinates at infinity by the following expression
	\be \label{eq-ADMdefn}
		\m_{ADM}:=\lim\limits_{r\to\infty}\frac{1}{16\pi}\int_{S_r}(\p_i \gamma_{ij}-\p_j \gamma_{ii})dS^j,
	\ee 
where $S_r$ is a large coordinate sphere and repeated indices are summed over.

It is now well-known that this quantity is purely geometric, and can be defined independently of the coordinates and limiting surfaces used for the computation \cite{Bartnik-86,Chrusciel}.
\end{definition}

\subsection*{Summary of main results}\hspace{30mm}

	A brief summary of the main results is given here for the readers' convenience.\\
	
	Proposition \ref{prop-cornersmooth} localises Miao's corner smoothing technique, so that we can smooth out a manifold with corner without changing its boundary geometry.\\
	
	Under a convexity kind of condition \eqref{eq-convexity-cond}, Theorem \ref{thm-defns} shows that the Bartnik mass defined in terms of isometric embeddings of $3$-manifolds yields the same value as when it is defined in terms Bartnik's geometric boundary data.\\
	
	A consequence of Theorem \ref{thm-defns}, stated as Remark \ref{rmk-CMC}, shows how known estimates for CMC Bartnik data can be used to gives estimates for non-CMC Bartnik data.\\
	
	In Theorem \ref{thm-cont}, we show that under condition \eqref{eq-convexity-cond} the Bartnik mass is continuous with respect to Bartnik data $(\S,g,H)$ in the $C^{2,\tau}\times C^2$ topology.

\vspace{15mm}
	
\section*{Acknowledgements}
This paper was born of a conversation with Pengzi Miao at the program \emph{Geometry and Relativity} hosted by the Erwin Schr\"odinger Institute in 2017. The author would like to thank the ESI for their hospitality and to thank Pengzi Miao for many stimulating conversations on this subject. The author would also like to thank Jeff Jauregui for pointing out an oversight in regard to the non-degeneracy condition, which was present in an earlier version of this article, as well as useful discussions pertaining to this issue.

\newpage

\section{Construction of collars}\label{S-collar}
The central idea here is the gluing together of a some ``interior" manifold with boundary to an asymptotically flat ``exterior" manifold with boundary, whose respective boundaries are somehow close. Specifically, we will ask that the Bartnik data of the boundaries be close in some norm. By \emph{Bartnik data}, we mean a triple $(\S,g,H)$ where $\S$ is a closed 2-surface, $g$ is a Riemannian metric on $\S$ and $H$ is a positive function. When we speak of the Bartnik data of a $3$-manifold (technically the boundary of a $3$-manifold), then $H$ is to be understood as the mean curvature of the boundary. Where this $3$-manifold is being treated as an interior manifold, then we take mean curvature to be with respect to the outward-pointing normal, and when the $3$-manifold is asymptotically flat then mean curvature will be taken with respect to the inward-pointing normal.

 Let $(\Omega,\gamma_1)$ be a Riemannian manifold and let $(\p\Omega,g_1)$ be some connected component of the boundary, and let $(M,\gamma_2)$ be an asymptotically flat manifold with boundary ${(\p M\cong \p\Omega,g_2)}$. We would like to glue these manifolds together to obtain an asymptotically flat manifold that contains a region isometric to $(\Omega,\gamma_1)$ minus a neighbourhood of $\p\Omega$, contains no minimal surfaces outside this region, and has ADM mass close to the ADM mass of $(M,\gamma_2)$. Where this closeness of the ADM mass is controlled by how close the Bartnik data of $\p\Omega$ is to that of $\p M$ in some topology.
 
 This is achieved by rescaling $(M,\gamma_2)$ slightly to ensure that $(\Omega,\gamma_1)$ can ``fit" inside\footnote{See Figure \ref{fig1}, below.} without introducing a minimal surface, then interpolating between the two manifolds with a collar manifold.

The collar is constructed in terms of the Bartnik data on the boundary of both manifolds. Given two nearby metrics $g_1$ and $g_2$ on $\S$, and a positive function $H_1$ on $\S$, we seek to construct a $3$-manifold $\C$ with nonnegative scalar curvature, with two connected components of the boundary. One component inducing the Bartnik data $(\S,g_1,H_1)$ and the other inducing the data $(\S,(1+\epsilon)^2g_2,\widehat{H}_1)$, where $\widehat H_1>\frac{1}{1+\epsilon} H_1$; also such that the boundary component $(\S,g_1,H_1)$ minimises area among all homologous surfaces in $\C$.

Clearly the induced metric on each end of the collar is so that the boundaries of $M$ and $\Omega$ can be identified with a component of the boundary of $\C$. The condition on the mean curvature is so that if the mean curvature $H_2$ of $(\p M,g_2)$ is sufficiently close to $H_1$, then $\C$ can be glued to $M$ while preserving nonnegativity of the scalar curvature. An illustration of how the collar is used is shown below in Figure \ref{fig1}. We remark that the collars we construct here require that $g_1$ and $g_2$ have the same area, so we also need to perform some rescaling; however, we omit reference to this in the diagram below for the sake of exposition.

\begin{figure}[h]
	\includegraphics[width=140mm]{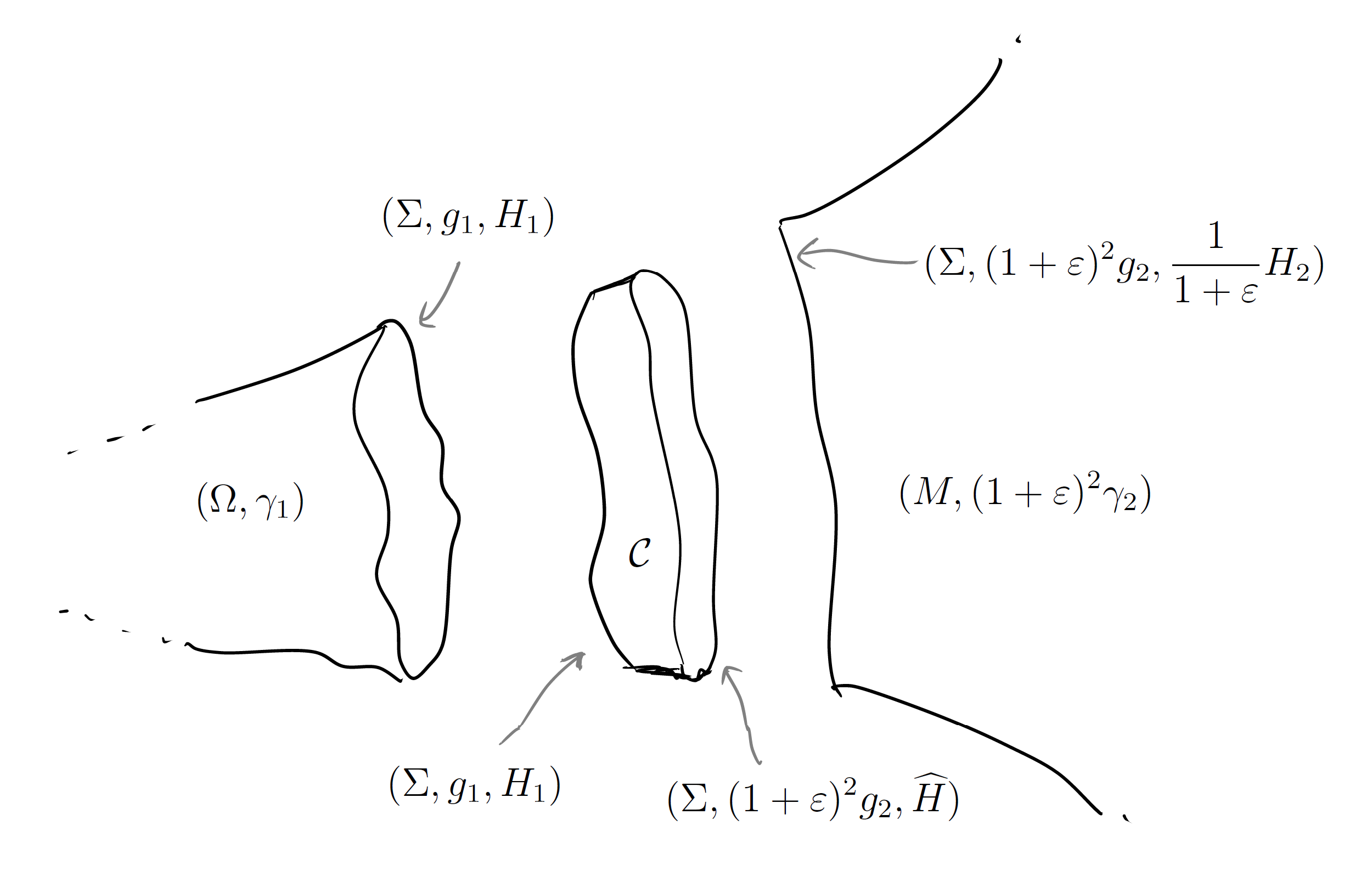}
	\caption{By ensuring $\widehat{H}\geq \frac{1}{1+\veps}H_2$, the collar can be inserted between $\Omega$ and $M$ in such a way that the corners can be smoothed while preserving non-negativity of the scalar curvature. If $g_1$ is close to $g_2$ in some sense, then the collar can be constructed to be small and therefore $\veps$ can also be small.}
	\label{fig1}
\end{figure}

The collar $\C$ used here is very similar to what is used in \cite{MM16}, which was inspired by the work of Mantoulidis and Schoen \cite{M-S}. In fact, these kinds of collars have proven quite useful in relation to the Bartnik mass \cite{CCMM,CCM}, and quasi-local mass quantities more generally \cite{M-X}.

For the sake of presentation, we give a precise definition of the type of collar that we will make use of throughout.

\begin{definition}
	Given Bartnik data $(\Sigma,g_1,H_1)$ and metric $g_2$ on $\Sigma$ satisfying $|\Sigma|_{g_1}=|\Sigma|_{g_2}$, we define an \emph{$\veps$-collar connection} between the Bartnik data and $g_2$, as a manifold $([0,1]\times\Sigma,\gamma)$ satisfying:
	\begin{enumerate}[(i)]
		\item $\gamma$ has nonnegative scalar curvature,
		\item the induced metric and mean curvature on $\Sigma_0$ are $g_1$ and $H_1$ respectively,
		\item the induced metric on $\Sigma_1$ is $(1+\epsilon)^2\,g_2$ and the mean curvature $\widehat H_1$ on $\Sigma_1$ satisfies $H_1>\widehat H_1>\frac{1}{(1+\epsilon)}H_1$,
		\item each leaf $\Sigma_t$ is mean convex.
	\end{enumerate}
\end{definition}

Most of what follows relies on making use of such a collar connection, so we now demonstrate how one can be constructed.

Let $g_1$ and $g_2$ be two metrics on a closed $2$-manifold $\Sigma$, with positive Gaussian curvature. By Lemma 1.2 of \cite{M-S}, there exists a smooth path of metrics $g(s)$ with positive Gaussian curvature satisfying:
\begin{enumerate}
	\item $g(0)=g_1$,
	\item $g(1)=g_2$,
	\item $\tr_{g(s)}g'(s)$.
\end{enumerate}

Following Miao and Xie \cite{M-X}, we define two constants that are associated to a given path:
\be 
\alpha = \frac14 \max_{t}|\dot g(s)|\qquad \beta=\min_{x,t}K(g(s)).
\ee 
The constant $\alpha$ in some sense measures how much $g_1$ and $g_2$ differ, however clearly alpha depends on the chosen path here. We would like to now fix a path in such a way that $\alpha$ can be controlled by $g_1$ and $g_2$. In the case considered by Miao and Xie \cite{M-X}, they demonstrate how such a path can be constructed. In their case, $g_2$ is taken to be round, but if one follows the proof of Proposition 4.1 in \cite{M-X}, it is clear that the proof remains valid if $g_2$ is replaced with any arbitrary metric with positive Gaussian curvature. That is, we have the following lemma.

\begin{lemma}[Proposition 4.1 of \cite{M-X}]\label{lem-alphacontrol}
	The path $g(s)$ as described above can be chosen such that, for any $\epsilon>0$ there is a $\delta>0$ such that if $\|g_1-g_2\|_{C^{2,\alpha}}<\delta$ then $\alpha<\epsilon$.
\end{lemma}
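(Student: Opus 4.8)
The plan is to repeat the construction behind Proposition~4.1 of \cite{M-X}, with $g_2$ an arbitrary metric of positive Gaussian curvature in place of a round one. In that argument the hypothesis that $g_2$ is round is used only to guarantee that the interpolating family of metrics stays in the (open) cone of metrics with positive Gaussian curvature; this remains true for any positively curved $g_2$ once $g_1$ is sufficiently $C^{2,\alpha}$-close to it, since positivity of the Gaussian curvature is an open condition in the $C^{2}$ topology. So I would construct an explicit trace-free path joining $g_1$ to (an isometric copy of) $g_2$ and estimate the associated constant $\alpha=\tfrac14\max_{s}|\dot g(s)|$ directly in terms of $\eta:=\|g_1-g_2\|_{C^{2,\alpha}}$. (Note that the very existence of such a path forces $|\Sigma|_{g_1}=|\Sigma|_{g_2}$, which is part of the standing hypotheses anyway.)

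First I would take the straight-line path $\ell(s):=(1-s)g_1+sg_2$, $s\in[0,1]$. Each $\ell(s)$ is a Riemannian metric (a convex combination of positive tensors) with $\|\ell(s)-g_1\|_{C^{2,\alpha}}\le\eta$, so for $\eta$ small enough, $g\mapsto K_g$ being continuous $C^{2}\to C^{0}$ and $K_{g_1}>0$ on the compact $\Sigma$ forces $K_{\ell(s)}>0$ throughout; moreover $\dot\ell(s)=g_2-g_1=O(\eta)$ uniformly. I would then rescale each slice by a constant so as to fix the area, setting $h(s):=\big(|\Sigma|_{g_1}/|\Sigma|_{\ell(s)}\big)\,\ell(s)$, so that $|\Sigma|_{h(s)}$ is independent of $s$. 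The scaling factor is $1+O(\eta)$, is smooth in $s$, and equals $1$ at $s=0,1$; hence $h(0)=g_1$, $h(1)=g_2$, the $h(s)$ still have positive Gaussian curvature, and $\dot h(s)=O(\eta)$ uniformly.

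The path $h(s)$ need not satisfy the trace-free condition $\tr_{h(s)}\dot h(s)=0$ required of the \cite{M-S}-type path, so next I would correct it by a diffeomorphism. For each $s$ I would solve the linear elliptic equation $\Div_{h(s)}X(s)=-\tfrac12\,\tr_{h(s)}\dot h(s)$ on $\Sigma$ (e.g.\ with $X(s)=\grad_{h(s)}u(s)$ and $\Delta_{h(s)}u(s)=-\tfrac12\tr_{h(s)}\dot h(s)$); this is solvable because the right-hand side has zero $dA_{h(s)}$-average, being $\tfrac{d}{ds}|\Sigma|_{h(s)}=0$. Schauder theory then gives $\|X(s)\|_{C^{2,\alpha}}\le C\|\tr_{h(s)}\dot h(s)\|_{C^{1,\alpha}}=O(\eta)$ with $C$ uniform in $s$, since the slices $h(s)$ lie in a fixed small $C^{2,\alpha}$-ball around $g_1$ and so are uniformly elliptic, and $X(s)$ depends smoothly on $s$. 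Letting $\psi(s)$ be the flow of $X(s)$ with $\psi(0)=\mathrm{id}$ and putting $g(s):=\psi(s)^{*}h(s)$, the standard identity $\dot g(s)=\psi(s)^{*}\big(\dot h(s)+\L_{X(s)}h(s)\big)$ gives $\tr_{g(s)}\dot g(s)=\psi(s)^{*}\big(\tr_{h(s)}\dot h(s)+2\Div_{h(s)}X(s)\big)=0$, so $g(s)$ is a path of exactly the type provided by Lemma~1.2 of \cite{M-S}: it is smooth in $s$, has positive Gaussian curvature ($K$ being a diffeomorphism invariant), satisfies $g(0)=g_1$, and $g(1)=\psi(1)^{*}g_2$ is isometric to $g_2$. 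The latter is all that is needed, since Bartnik data is only defined up to isometry and one simply precomposes $\psi(1)$ into the later gluing. Finally $|\dot g(s)|=|\dot h(s)+\L_{X(s)}h(s)|\le C'\eta$ uniformly, whence $\alpha=\tfrac14\max_{s}|\dot g(s)|\le C''\eta$; given $\epsilon>0$ it then suffices to take $\delta:=\epsilon/C''$.

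The step I expect to be the main obstacle is the trace-free normalisation. One cannot simply conformally rescale the straight-line path: a conformal correction $e^{2\varphi(s)}h(s)$ forces $\dot\varphi(s)=-\tfrac14\tr_{h(s)}\dot h(s)$, which is incompatible with the endpoint conditions $\varphi(0)=\varphi(1)=0$ unless $\int_{0}^{1}\tr_{h(s)}\dot h(s)\,ds\equiv 0$ pointwise on $\Sigma$, and this fails in general. The diffeomorphism (Moser-type) correction above gets around this, at the harmless cost of reaching $g_2$ only up to isometry; the only remaining work, namely arranging the elliptic estimate for $X(s)$ to be uniform in $s$, is routine precisely because all the slices $h(s)$ are confined to an arbitrarily small $C^{2,\alpha}$-neighbourhood of $g_1$.
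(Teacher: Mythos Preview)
Your argument is correct and considerably more explicit than what the paper offers: the paper gives no proof here, simply citing Proposition~4.1 of \cite{M-X} and remarking that the hypothesis that $g_2$ be round is inessential. The paper does indicate (in the proof of Lemma~\ref{lemma-collar}) that the underlying path construction from \cite{M-S} proceeds via uniformisation; you instead start from the convex combination $\ell(s)=(1-s)g_1+sg_2$, which is more elementary and makes the smallness of $\alpha$ immediate since $\dot\ell=g_2-g_1$ is already $O(\eta)$. The Moser-type diffeomorphism correction you then use to enforce $\tr_{g(s)}\dot g(s)=0$ is the standard device and is essentially what any such construction must employ. A side benefit of your straight-line start is that it is manifest the entire family stays in a small $C^{2,\alpha}$-ball around $g_1$, a fact the paper has to invoke separately.

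One point you flag is worth emphasising: the outcome is $g(1)=\psi(1)^*g_2$, isometric but not literally equal to $g_2$, and you are right that this is both unavoidable and harmless for the gluing applications. Indeed $\tr_{g(s)}\dot g(s)=0$ forces the area form $dA_{g(s)}$ to be independent of $s$, so a trace-free path literally joining $g_1$ to $g_2$ can exist only when $dA_{g_1}=dA_{g_2}$ pointwise, which is not among the hypotheses. The paper's endpoint condition $g(1)=g_2$ should therefore be read up to isometry; this is already implicit in \cite{M-S}, where the target is round and hence determined only up to the conformal group of $S^2$ anyway.
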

Throughout the remainder of this article, we will take any such path of metrics to be chosen as in Lemma \ref{lem-alphacontrol}. It is worth remarking here that the proof in fact only requires $C^{2,\tau}$ closeness to control the Gaussian curvature.

Using these paths, we now construct an example of $\veps$-collar connections between nearby metric.

\begin{lemma}\label{lemma-collar}
	Let $g_o$ be a metric on $\Sigma$, and $H_o$ be positive function on $\Sigma$, satisfying
	\be \label{eq-convexity-cond}
	R(g_o)-2H_o\Delta(\frac{1}{H_o})-\frac12 H_o^2>0.
	\ee 
	Denote by $\mathcal{G}_\delta$ and $\mathcal{H}_\delta$, the two $\delta$-neighbourhoods 
	
	\bee \mathcal{G}_\delta:=\{ g:\|g-g_o\|_{C^{2,\tau}}<\delta \}\qquad and \qquad \mathcal{H}_\delta:=\{H:\|H-H_o\|_{C^2}<\delta  \}.
	\eee 
	Then for any $\epsilon>0$, there exists a $\delta>0$, such that for any $g_1,g_2\in\Gd$ satisfying $|\S|_{g_1}=|\S|_{g_2}$, and $H_1\in\Hd$ there exists an $\veps$-collar connection from $(\Sigma,g_1,H_1)$ to $g_2$.
\end{lemma}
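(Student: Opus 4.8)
The plan is to build the $\veps$-collar connection explicitly as a warped-type product over the path $g(s)$ supplied by Lemma \ref{lem-alphacontrol}, following the Mantoulidis--Schoen / Miao--Xie template. Concretely, on $[0,1]\times\Sigma$ I would look for a metric of the form $\gamma = A(t)^2\,dt^2 + u(t,x)^2\, g(\phi(t))$ (or, after reparametrising, $dt^2 + u(t,x)^2 g(s(t))$ on a slab $[0,T]\times\Sigma$), where $s(t)$ runs monotonically from $0$ to $1$, and $u$ is a positive function close to a constant. The point of using the Miao--Xie path is that $\tr_{g(s)}g'(s)=0$, which kills the cross term in the scalar curvature computation and lets one write $R(\gamma)$ as $u^{-2}$ times an expression of the schematic form $2K(g(s)) - 2u\Delta_{g(s)}u - (\text{positive multiple of }(\partial_t u)^2) + O(\a) + (\text{small terms})$, where $\a=\frac14\max|\dot g|$ is the Miao--Xie constant. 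The convexity hypothesis \eqref{eq-convexity-cond} at $g_o,H_o$ is exactly what guarantees the analogous quantity is strictly positive for the \emph{seed data} $(g_1,H_1)$ once $g_1\in\Gd$ and $H_1\in\Hd$ with $\delta$ small, by continuity; and Lemma \ref{lem-alphacontrol} lets us additionally take $\a$ as small as we like by shrinking $\delta$ further. So the scalar curvature positivity (i) becomes a matter of choosing $u$ to solve, or merely sub-solve, the relevant ODE/inequality in $t$ with the boundary behaviour dictated by (ii) and (iii).

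The boundary conditions are imposed as follows. At $t=0$ I want the induced metric to be $g_1$ and the mean curvature to be the prescribed $H_1$; since the induced metric on $\Sigma_t$ is $u(t,\cdot)^2 g(s(t))$, I set $u(0,\cdot)\equiv 1$ and $s(0)=0$, and then the mean curvature of $\Sigma_0$ is (up to the lapse normalisation) $-2\partial_t \log u|_{t=0} + \ldots$, so I choose the initial $t$-derivative of $u$ to realise $H_1$ — note $H_1$ being a general positive function, not a constant, is accommodated because $u$ is allowed to depend on $x$. At the outer end $t=T$ I need the induced metric to be $(1+\veps)^2 g_2$, which forces $u(T,\cdot)^2 g(s(T)) = (1+\veps)^2 g_2$; taking $s(T)=1$ so that $g(s(T))=g_2$, this means $u(T,\cdot)\equiv 1+\veps$. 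Thus $u$ interpolates from the constant $1$ to the constant $1+\veps$, and the freedom in how it does so (how fast, with what profile) is what I use to keep $R(\gamma)\ge0$ while arranging the mean curvature bound $H_1 > \widehat H_1 > \frac{1}{1+\veps}H_1$ at $t=T$: since the outer mean curvature is controlled by $-\partial_t\log u$ together with lower-order terms, and $u$ is increasing from $1$ to $1+\veps$, one can tune the profile so that $\widehat H_1$ lands strictly between $\frac{1}{1+\veps}H_1$ and $H_1$. Condition (iv), that each leaf $\Sigma_t$ be mean convex, likewise follows from keeping $\partial_t u \ge 0$ (so the leaves are expanding) plus the smallness of the curvature and derivative terms; and the area condition $|\Sigma|_{g_1}=|\Sigma|_{g_2}$ is precisely what makes the path $g(s)$ with $\tr_{g(s)}g'(s)=0$ available and consistent at both ends.

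I would organise the proof as: (1) fix $\veps>0$; invoke continuity of the left side of \eqref{eq-convexity-cond} to get $\delta_1$ so that for $g_1\in\mathcal G_{\delta_1}$, $H_1\in\mathcal H_{\delta_1}$ the quantity $R(g_1)-2H_1\Delta(1/H_1)-\tfrac12 H_1^2$ is bounded below by a positive constant $c_0$ depending only on $g_o,H_o$; (2) invoke Lemma \ref{lem-alphacontrol} to get $\delta_2$ so that $\|g_1-g_2\|_{C^{2,\tau}}<2\delta_2$ (which holds if both lie in $\mathcal G_{\delta_2}$) gives $\a<\a_0$ for a threshold $\a_0$ chosen in terms of $c_0$ and $\veps$; set $\delta=\min(\delta_1,\delta_2)$; (3) write down the candidate metric $\gamma = dt^2 + u(t,x)^2 g(s(t))$ on $[0,T]\times\Sigma$ with an explicit choice of $u$ and $s$ (e.g. $u$ a suitable smooth monotone function of $t$ with prescribed one-sided derivatives at the endpoints, and $s(t)$ a smooth monotone reparametrisation with vanishing derivatives to high order at $0$ and $T$ so the ends glue smoothly), and compute $R(\gamma)$, the induced metrics, and the mean curvatures; (4) verify (i)--(iv) using the bounds from (1)--(2). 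The main obstacle is step (3)--(4): getting the scalar curvature formula for this $x$-dependent warping right and genuinely closing the inequality $R(\gamma)\ge0$ simultaneously with the two-sided mean curvature pinch at the outer end. The delicate point is that the term $-2u\Delta_{g(s)}u$ has no sign in general, so one cannot simply take $u$ constant in $x$; one must let $u$ depend on $x$ to match the non-constant $H_1$ at the inner boundary, and then show the $x$-derivatives this introduces are controlled — either by a careful choice making $u$ equal to a constant except in a thin layer near $t=0$, pushing all the $x$-dependence into a region where the other favourable terms dominate, or by treating the defining relation for $u$ as a parabolic-type problem and using the smallness of $\a$ and $\delta$ to get the needed a priori estimates. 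Establishing that this can be done with the constants arranged consistently is the crux; the rest is bookkeeping.
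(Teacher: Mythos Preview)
Your overall plan follows the Mantoulidis--Schoen/Miao--Xie template as the paper does, but you place the $x$-dependence in the wrong factor of the ansatz, and this is precisely what creates the obstacle you flag as ``the crux''. The paper takes
\[
\gamma \;=\; k^2 A(x)^2\,ds^2 \;+\; r_o^{-2}\,v_m(ks)^2\,g(s),
\]
where the conformal factor on $g(s)$ depends on $s$ only (via the Schwarzschild profile $v_m$ with parameter $m$), while all the $x$-dependence sits in the \emph{lapse} $A(x)$. With this choice the mean curvature of every leaf is
\[
H_{(s)}(x)\;=\;\frac{2}{A(x)\,v_m(ks)}\sqrt{1-\tfrac{2m}{v_m(ks)}},
\]
so prescribing $A(x)=\tfrac{2}{H_1 r_o}\sqrt{1-\tfrac{2m}{r_o}}$ gives $H_{(0)}=H_1$ on the nose, and --- crucially --- $H_{(s)}$ is a \emph{scalar multiple} of $H_1$ for every $s$. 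The two-sided pinch $\tfrac{1}{1+\veps}H_1<\widehat H_1<H_1$ at $s=1$ then reduces to a single scalar inequality on that multiplier, which the paper verifies explicitly via \eqref{eq-H-collar-est}. In your ansatz with $u(T,\cdot)\equiv 1+\veps$ constant and unit (or $t$-dependent) lapse, the outer mean curvature can be non-constant only through $\partial_t u(T,x)$, and arranging it to land pointwise in a window of width $O(\veps)$ around a non-constant $H_1$, while keeping $u$ constant in $x$ at both ends, is genuinely awkward for small $\veps$.

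The second payoff of the lapse trick is that the Laplacian term in $R(\gamma)$ becomes exactly $-2H_1\Delta_g(1/H_1)$ --- not a generic $-2u\Delta_g u$ --- so the convexity hypothesis \eqref{eq-convexity-cond} is consumed directly, with no thin-layer argument or parabolic estimates needed. The remaining freedom is the mass parameter $m$ (chosen so that $\tfrac12 H_1^2(1-\tfrac{2m}{r_o})^{-1}$ stays below $\chi:=\inf_{\S,\,g\in\Gd}\{R(g)-2H_1\Delta_g H_1^{-1}\}$) and the scale $k$ (chosen so that $r_o^{-2}v_m(k)^2=(1+\veps)^2$); then $\delta$ is shrunk via Lemma \ref{lem-alphacontrol} until the residual $|\dot g|^2$ term is dominated. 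Your steps (1), (2), and the bookkeeping in (4) are fine; it is the ansatz in (3) that should be swapped for the form in which the lapse carries the $x$-dependence.
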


\begin{proof}
	Fix some small $\veps>0$. We first ask that $\delta$ be sufficiently small so that 
		\bee 
		\inf\limits_{g\in\Gd,H\in\Hd}R(g)-2H\Delta(\frac{1}{H})-\frac12 H^2>0.
		\eee 
		
	We will also require another smallness condition on $\delta$ that is used to make sure our collars have positive scalar curvature. However, in order to state this condition we must introduce some definitions so we reserve explicitly stating this until after equation \eqref{eq-ugly-alpha}.
		
	Now, we fix any two pairs $(g_i,H_i)\in\Gd\times\Hd$, $i=1,2$, with $|\S|_{g_1}=|\S|_{g_2}$. For the chosen $\veps>0$, we seek to construct an $\veps$-collar connection from $(\S,g_1,H_1)$ to $g_2$.
	
	We will make use of the constant
	\bee 
		\chi:=\inf\limits_{\S,g\in\Gd}\{R(g)-2H_1\Delta_g\frac{1}{H_1}\}
	\eee	 
	noting that we have $\chi>\frac12 H_1^2$.
	
	Let $g(s)$ be the path of metrics on $\S$ described above with $g(0)=g_1$ and $g(1)=g_2$, and note that we can ensure that this path stays entirely within $\Gd$. This can be seen by following the construction of such a path using uniformisation in the proofs in \cite{M-S}, of Proposition 1.1 and Lemma 1.2 therein.
	
	We now construct a collar, modelled on part of the spatial Schwarzschild manifold, similar to that used in \cite{MM16} (cf. \cite{CCMM,M-S,M-X}). Recall the spatial Schwarzschild manifold of mass $m$ can be expressed in the form
	\be 
		\gamma_m=dt^2+v_m(t)^2g_*,
	\ee 
	where $g_*$ is the standard round metric on $\S$ of area $4\pi$, and $v_m$ is a smooth positive function satisfying
	\bee 
	v'_m(t)=\sqrt{1-\frac{2m}{v_m(t)}}.
	\eee
	Usually one would parametrise $v(t)$ such that $v(0)=2m$, however for our purposes we set $v(0)=r_o:=\sqrt{\frac{|\Sigma|_{g_o}}{4\pi}}$, the area radius of $g_o$ (and the area radius of both $g_1$ and $g_2$, by construction). Note that this requires us to restrict our attention to $m\leq \frac12 r_o$.

	Using this function $v_m$, we define the metric $\gamma$ on $[0,1]\times \Sigma$ by
	\be \label{eq-metricdefn}
	\gamma:=k^2A(x)^2ds^2+r_o^{-2}v_m(ks)^2g(s),
	\ee 
	where $k$ is a positive constant and $A(x)$ is a positive function on $\Sigma$.
	
	Setting $h(s):=r_o^{-2}v_m(ks)g(s)$, the mean curvature of the level set ${\Sigma_s:=\{s\}\times\Sigma}$ is easily computed as
	\be \label{eq-Hformula}
	H_{(s)}(x)=\frac{1}{2kA(x)}\tr_{h}\dot{h}=\frac{2}{A(x)v_m(ks)}\sqrt{1-\frac{2m}{v_m(ks)}},
	\ee 
	where we make use of the property $\tr_g\dot{g}=0$ here.
	
	In order to prescribe the mean curvature of $\Sigma_0$, this leads us to choose
	\be 
	A(x):=\frac{2}{H_1r_o}\sqrt{1-\frac{2m}{r_o}}.
	\ee
	
	The scalar curvature of $\gamma$ is straightforward to compute as 
	\be \label{eq-Scalarcurv}
	R(\gamma)=r_0^2\( v_m^{-2}\( R(g)-2H_1\Delta_g\frac{1}{H_1}-\frac{1}{2} H_1^2\(1-\frac{2m}{r_o}\)^{-1} \)-\frac{H_1^2}{16}\(1-\frac{2m}{r_o}\)^{-1}k^{-2}|\dot g|_g^2 \),
	\ee 
	which is easily obtained by following Section 1 of \cite{M-S} (cf. \cite{MM16}).
	
	The term
	\bee 
	R(g)-2H_1\Delta_g\frac{1}{H_1}-\frac{1}{2} H_1^2\(1-\frac{2m}{r_o}\)^{-1}
	\eee  in \eqref{eq-Scalarcurv} can be made positive by ensuring $\frac12 H^2_1\(1-\frac{2m}{r_o} \)^{-1}<\chi$, which we do by fixing $m$ appropriately.\\
	
	 To this end, define $\Xi:=\frac{1}{2}\( 1+\frac{H_1^2}{2\chi}\)$, which is strictly less than $1$ by definition of $\chi$. Then we set
	\be \label{eq-mdefn}
	m=\frac{r_o}{2}\( 1-\frac{H_1^2}{2\Xi\chi} \),
	\ee 
	which is positive by assumption. Note that so far there is no requirement that the parameter $m$ be positive, however it will become apparent below that we require this in order to control the mean curvature of the boundary component $\Sigma_1:=\{s=1\}$. In fact, control of the mean curvature with respect to the area growth ($\veps$) is subtle, and is the reason we consider the particular form of the collars here rather than something more generic.
	
	From (\ref{eq-Scalarcurv}) we now have
	\be \label{eq-Scal-2}
	R_\gamma\geq r_o^2\chi\( v_m^{-2}(1-\Xi) -\frac12\Xi k^{-2}\alpha\).
	\ee
	
	So, in particular, if $k^{-2}\alpha$ is sufficiently small then $R_\gamma>0$.

We would like to fix the metric at $\S_1$ to be $(1+\veps)^2 g_2$, which is achieved by choosing $k$ appropriately. In particular, we choose $k$ small enough so that $r_o^{-2}v_m(k)^2=(1+\veps)^2$. Clearly this is possible as the range of $v_m$ is $[r_o,\infty)$. 
	
	Note now that $\sqrt{1-\frac{m}{r_o}}\leq v_m'(t)\leq1$ so we have $r_o+k\sqrt{1-\frac{2m}{r_o}}\leq v_m(k)\leq r_o+k$, so for this choice of $k$ we have $\veps r_o\(1-\frac{2m}{r_o} \)^{-1/2}\geq k\geq \veps r_o$. Plugging in our choice of $m$ gives
\be 
	\veps r_o \frac{\sqrt{2\Xi\chi}}{H_1}\geq k \geq \veps r_o.
\ee 

Substituting this into \eqref{eq-Scal-2} we have

\be 
R_\gamma\geq \chi\( \frac{1-\Xi}{\( 1+\veps H_1^{-1}\sqrt{2\Xi\chi} \)^2} -\frac{\Xi\alpha}{2\veps^2}\).
\ee
That is, scalar curvature is positive if we have
\be \label{eq-ugly-alpha}
	\alpha<\frac{2H_1\veps^2(1-\Xi)}{H_1+\veps\sqrt{2\Xi\chi}}.
\ee 

At this point we remark that as $\delta$ is made small $\alpha$ goes to zero and the right hand side of \eqref{eq-ugly-alpha} approaches
\bee 
	\frac{H_o\veps^2\(1-\frac{H_o^2}{2(R(g_o)-2H_o\Delta_{g_o}H^{-1}_o)}  \)}{H_o+\veps\sqrt{R(g_o)-2H_o\Delta_{g_o}H_o^{-1}+\frac12H_o^2}}>0.
\eee 
In particular, we could arrange that $\delta$ was chosen initially so that \eqref{eq-ugly-alpha} is satisfied for all $g,H\in\Gd\times\Hd$. It follows that the collar manifold has positive scalar curvature.

	Finally, we estimate the mean curvature at the end of the collar using \eqref{eq-Hformula},
	\begin{align}\nonumber
	H_{(s=1)}&=\frac{r_o}{v_m(k)}\( \frac{1-\frac{2m}{v_m(k)}}{1-\frac{2m}{r_o}} \)^{1/2}\,H_1\\
	&=\frac{1}{1+\epsilon}\( \frac{1-\frac{1}{1+\epsilon}\(1- \frac{H_1^2}{2\Xi\chi}\)}{\frac{H_1^2}{2\Xi\chi}} \)^{1/2}H_1\nonumber\\
	&=\frac{1}{1+\epsilon}\( \frac{(1+\epsilon)\frac{2\Xi\chi}{H_1^2}-\(\frac{2\Xi\chi}{H_1^2}- 1\)}{1+\epsilon} \)^{1/2}H_1\nonumber\\
	&=\frac{1}{1+\epsilon}\( \frac{1+\epsilon\frac{2\Xi\chi}{H_1^2}}{1+\epsilon} \)^{1/2}H_1>\frac{1}{1+\epsilon}H_1,\label{eq-H-collar-est}
	\end{align}
	
which completes the proof.

\end{proof}

The condition \eqref{eq-convexity-cond} appears below in the statements of Theorems \ref{thm-collar-glue-in}, \ref{thm-defns} and \ref{thm-cont}. This is entirely due to the fact that these proofs rely on making use of Lemma \ref{lemma-collar} to construct an $\veps$-collar connection. By constructing different $\veps$-collar connections, one could obtain versions of these theorems under different hypotheses. It is not clear if there are optimal circumstances under which such collars can be constructed, so we do not pursue here, a zoo of qualitatively similar collars. We merely remark, that the hypotheses throughout are not optimal in this sense.

Condition \eqref{eq-convexity-cond} is exactly the condition used by Miao and the author \cite{MM16} to attach a horizon to a manifold with boundary, using similar collars. Through an application of the Riemannian Penrose inequality, we obtain a lower bound on the ADM mass of an $n$-manifold in terms of local geometry of a hypersurface $\S$, similar to the lower bound given by the Hawking mass in dimensions 3. In fact, by integrating \eqref{eq-convexity-cond}, one sees that this condition in fact implies positivity of the Hawking mass.

\section{Locally smoothing corners}\label{S-smoothing}
In this section we provide a tool for the proofs to follow, which allows us to smoothly glue together two manifolds along their respective boundaries, while preserving non-negativity of scalar curvature. More specifically, we would like to localise the corner-smoothing technique of \cite{Miao02}. This is done by smoothing the corner and then using the Corvino--Schoen gluing technique \cite{Corvino-00,C-S-06}, in the form generalised by Delay \cite{Delay-11}, to glue the original manifold back on away from the corner. We first recall the definition of an asymptotically flat manifold with corner along a hypersurface.

\begin{definition}[See Miao, \cite{Miao02}]\label{def-corner}
		A manifold with corner along a hypersurface $\Sigma=\p \Omega$ is defined to be a smooth manifold $M$ equipped with a $C^{k,\alpha}$ Riemannian metric $\gamma_-$ on a bounded set $\Omega$ and a $C^{k,\alpha}$ Riemannian metric $\gamma_+$ on $M\setminus\overline \Omega$, such that both metrics are $C^k$ up to the boundary and induce the same metric on $\Sigma$.
		
		We say $M$ is asymptotically flat if $(M\setminus\overline \Omega,\gamma_+)$ is asymptotically flat in the usual sense, and define the ADM mass of $(M,\gamma_-,\gamma_+)$ to be the ADM mass of $(M\setminus\overline \Omega,\gamma_+)$.
\end{definition}

In general, the mean curvature of such a corner $\Sigma$ with respect to $\gamma_-$ and $\gamma_+$ will differ. However, if the mean curvature with respect to $\gamma_+$ is no larger than the mean curvature with respect to $\gamma_-$, then the positive mass theorem still holds \cite{Miao02}. This condition on the mean curvature at the corner was introduced by Bartnik \cite{Bartnik-TsingHua} and an explanation of it is given in Section 2 of \cite{Miao02}. Heuristically, this condition says that the scalar curvature across $\S$ is nonnegative in a distributional sense. The positive mass theorem for manifolds with corner was proven independently by Miao \cite{Miao02}, and Shi and Tam \cite{ShiTam02}.

\begin{thm}[Miao, Shi--Tam \cite{Miao02,ShiTam02}]\label{thm-PMTcorner}
	Let $(M,\gamma_-,\gamma_+)$ be a $C^{2,\tau}$ asymptotically flat manifold with corner along a hypersurface $\S$ and suppose the scalar curvatures of $\gamma_-$ and $\gamma_+$ are nonnegative. Further suppose that 
	\be 
		H_-\geq H_+
	\ee 
	where $H_\pm$ is the mean curvature of $\S$ with respect $g_\pm$ (respectively), and with respect to the normal direction pointing towards infinity. Then the ADM mass of $(M,g\_-,g\_+)$ is nonnegative. Furthermore, if $H_->H_+$ at some point then the mass is strictly positive.
\end{thm}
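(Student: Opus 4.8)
The plan is to reduce the statement to the classical (smooth) positive mass theorem by smoothing the metric across the corner and then conformally deforming the smoothed metric to have vanishing scalar curvature, following the strategy of Miao \cite{Miao02}. The mean curvature hypothesis $H_-\geq H_+$ enters precisely because it makes the distributional scalar curvature of $(M,\gamma_-,\gamma_+)$ nonnegative: in Fermi coordinates $(t,x)$ about $\S$ in which $\gamma_\pm=dt^2+g_\pm(t,x)$ and $\S=\{t=0\}$, the scalar curvature, interpreted as a distribution on $M$, is the (nonnegative, by hypothesis) $L^1$ function $R(\gamma_\pm)$ on $M\setminus\S$ plus a positive multiple of $(H_--H_+)$ times the induced hypersurface measure of $\S$, which under the assumption is a nonnegative measure.

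The first step is the mollification. Using a standard mollifier in the normal variable $t$, I would construct a one-parameter family of smooth metrics $\gamma_\delta$ on $M$ that coincide with $\gamma_-$ on $\{t<-\delta\}$ and with $\gamma_+$ on $\{t>\delta\}$, interpolating in the collar $\{|t|<\delta\}$. Since $\S$ is compact, $\gamma_\delta\equiv\gamma_+$ on the entire asymptotic region, so $(M,\gamma_\delta)$ is asymptotically flat with \emph{exactly} the same ADM mass as $(M,\gamma_-,\gamma_+)$. The essential — and hardest — point is that the interpolating family can be chosen so that the negative part of its scalar curvature tends to zero, $\|(R(\gamma_\delta))_-\|_{L^1(M)}\to 0$ (and, with a little more care, in $L^{3/2}$), while at the same time $\int_M R(\gamma_\delta)\,dV_{\gamma_\delta}\to \int_{M\setminus\S}R(\gamma_\pm)\,dV + c_0\int_\S(H_--H_+)\,d\sigma\geq 0$. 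This is exactly the distributional convergence of the scalar curvature under mollification; the inequality $H_-\geq H_+$ is what prevents a negative singular contribution from surviving in the limit, and controlling the scalar curvature error concentrated in the thin collar $\{|t|<\delta\}$ (so that its $L^1$ norm actually decays, rather than merely staying bounded) is the main obstacle.

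The second step is the conformal correction. For each small $\delta$ I would solve the linear elliptic problem $\Delta_{\gamma_\delta}u_\delta=\tfrac18 R(\gamma_\delta)u_\delta$ with $u_\delta\to 1$ at infinity; solvability, positivity of $u_\delta$, and convergence $u_\delta\to 1$ in a suitable weighted norm all follow once $\|(R(\gamma_\delta))_-\|$ is sufficiently small, which is guaranteed by the previous step. Then $\tilde\gamma_\delta:=u_\delta^4\gamma_\delta$ is a smooth asymptotically flat metric with $R(\tilde\gamma_\delta)\equiv 0$, so the classical positive mass theorem gives $\m_{ADM}(\tilde\gamma_\delta)\geq 0$. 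Reading off the $1/r$ coefficient of $u_\delta$ from the Green's-function representation of the PDE gives, for a universal constant $c_1>0$,
\[
\m_{ADM}(M,\gamma_-,\gamma_+)=\m_{ADM}(\gamma_\delta)=\m_{ADM}(\tilde\gamma_\delta)+c_1\int_M R(\gamma_\delta)\,u_\delta\,dV_{\gamma_\delta}\ \geq\ c_1\int_M R(\gamma_\delta)\,u_\delta\,dV_{\gamma_\delta}.
\]
Letting $\delta\to 0$ and using $u_\delta\to 1$ together with the scalar-curvature convergence of the mollification step yields
\[
\m_{ADM}(M,\gamma_-,\gamma_+)\ \geq\ c_1\Big(\int_{M\setminus\S}R(\gamma_\pm)\,dV + c_0\int_\S(H_--H_+)\,d\sigma\Big)\ \geq\ 0,
\]
which is the asserted nonnegativity. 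If moreover $H_->H_+$ at some point of $\S$, then by continuity $H_->H_+$ on an open subset of $\S$, so the right-hand side above is strictly positive and hence $\m_{ADM}(M,\gamma_-,\gamma_+)>0$.

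I expect the bulk of the work to lie in the mollification estimate of the second step, where the precise form of the interpolation must be arranged so that the scalar-curvature error in the collar has vanishing $L^1$ norm; this is the only place the corner inequality is genuinely used. The remaining ingredients — solvability of the conformal equation with $u_\delta\to 1$, the mass expansion, and passage to the limit — are standard once that estimate is secured. For the strict inequality one may either use the explicit positive lower bound displayed above or, alternatively, invoke the rigidity case of the smooth positive mass theorem applied to the limit; and one could instead establish the whole result via the quasi-spherical / Brown--York monotonicity argument of Shi and Tam \cite{ShiTam02}, which avoids the smoothing altogether at the cost of a different set of technical estimates.
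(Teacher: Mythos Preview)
The paper does not supply its own proof of this theorem; it is quoted from \cite{Miao02,ShiTam02} and only briefly summarized in the paragraph immediately following the statement. Your proposal is a faithful outline of Miao's original argument --- mollify across $\S$, control the negative part of the scalar curvature of the mollified metric via the hypothesis $H_-\geq H_+$, conformally deform to zero scalar curvature, apply the smooth positive mass theorem, and pass to the limit --- and it matches the description the paper gives. Your remark that the Shi--Tam approach provides an independent route is also consistent with the paper's discussion.
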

We remark that a rigidity statement is also obtained in \cite{Miao02}, assuming $(M,\gamma_-,\gamma_+)$ is $C^{3,\tau}$ away from the corner.

Both proofs of Theorem \ref{thm-PMTcorner} are fundamentally different in approach: Shi and Tam's proof is a spinor argument, while Miao's proof involves approximating $(M,\gamma_-,\gamma_+)$ by smooth manifolds with nonnegative scalar curvature and ADM mass converging to that of $(M,\gamma_-,\gamma_+)$. The sequence of approximating manifolds is obtained by locally mollifying the corner, and then by using a conformal change to correct for any negative scalar curvature that may have been introduced by the mollification. Unfortunately this conformal change affects the entire manifold; that is, it is not a local procedure. In order to localise this, we use the Corvino--Schoen gluing technique to glue $(\Omega,\gamma_-)$ and $(M\setminus\overline{\Omega},\gamma_+)$ to the smoothed manifold, away from the corner.

This gluing technique was used by Corvino and Schoen to glue exact Schwarzschild exteriors to general asymptotically flat manifolds. From this they concluded that manifolds with such asymptotics are in fact dense in the space of asymptotically flat manifolds \cite{C-S-06}. It was then noted by Delay \cite{Delay-11} that this gluing technique could be used to glue together much more generic manifolds. In fact, provided that a given manifold is not static where the gluing is to take place, any metric that is sufficiently close (in a $C^k$ sense) can be glued on. Recall, a Riemannian metric is said to be static if the $L^2$-adjoint of the linearised scalar curvature has a non-trivial kernel. That is, if there exists a non-trivial function $f$ satisfying

\be 
	L_{ij}^*(f):=\nabla^2_{ij}f-\Delta(f)g_{ij}-f\Ric_{ij}=0.
\ee 

A well-known result of Beig, Chru\'sciel and Schoen says that generically this operator does indeed have a trivial kernel \cite{BCS-05}. In particular, by perturbing the given metric, we can ensure that no such $f$ exists.

For the readers' convenience, we recall now (and slightly paraphrase) the gluing theorem given in \cite{Delay-11}, which we employ here.

\begin{thm}[Theorem 1.1 of \cite{Delay-11}]\label{thm-Delay}
	Let $V_1\Subset V_2\Subset V_3$ be open proper subsets of each other and of a smooth $n$-dimensional Riemannian manifold $M$, and let $\chi$ be a smooth cut-off function equal to $1$ on $\overline{V_1}$ and vanishing on the complement of $V_2$. Given a metric $\gamma_o$ that is not static, if $\tilde\gamma$ is sufficiently close to $\gamma_o$ in $C^k$, $k\geq \lceil\frac{n}{2}\rceil+6$, on $\overline{V}:=\overline{V_2\setminus V_1}$ then there exists a symmetric covariant 2-tensor $h$ supported on $\overline{V}$, such that the metric
	\be 
		\gamma_s:=\chi \gamma + (1-\chi)\tilde{\gamma}+h
	\ee 
	has scalar curvature interpolating these metrics; i.e.
	\be 
		R(\gamma_s)=\chi R(\gamma)+(1-\chi)R(\tilde \gamma).
	\ee 
\end{thm}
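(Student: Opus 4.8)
The plan is to solve directly for the correction tensor $h$ using the Corvino--Schoen underdetermined-elliptic scheme, so that the content of this theorem is essentially imported from \cite{Corvino-00,C-S-06,Delay-11}; I will only sketch the architecture. Write $\gamma_b:=\chi\gamma+(1-\chi)\tilde\gamma$ for the naively interpolated background metric and $F:=\chi R(\gamma)+(1-\chi)R(\tilde\gamma)$ for the target scalar curvature. On $\overline{V_1}$ one has $\gamma_b=\gamma$ and off $V_2$ one has $\gamma_b=\tilde\gamma$, so the defect $F-R(\gamma_b)$ is supported in $\overline V:=\overline{V_2\setminus V_1}$; moreover, since $\tilde\gamma$ (and, as the hypothesis implicitly requires, $\gamma$) is $C^k$-close to $\gamma_o$ on $\overline V$, the background $\gamma_b$ is $C^k$-close to $\gamma_o$ on $\overline V$ and the defect is small in $C^{k-2}$. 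It then remains to find $h$ supported in $\overline V$ with $R(\gamma_b+h)=F$, equivalently $P h=\bigl(F-R(\gamma_b)\bigr)-Q(h)$, where $P:=DR_{\gamma_b}$ is the (second-order) linearised scalar curvature and $Q$ collects the quadratic-and-higher terms of $h\mapsto R(\gamma_b+h)$.

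The analytic core is to build a bounded right inverse for $P$ whose image consists of tensors supported in $\overline V$. Following Corvino, one introduces a weight function $\rho$ on $\overline V$ that is positive in the interior and decays rapidly at $\partial V_1$ and $\partial V_2$, and works in the weighted Sobolev spaces built from $\rho$. The formal $L^2$-adjoint of $P$ is the operator $L^*$ displayed just before this statement (computed at $\gamma_b$), which differs from $L^*$ at $\gamma_o$ by terms that are small on $\overline V$. The key a priori estimate is the coercivity $\|u\|_{H^2_\rho}\le C\,\|L^* u\|_{L^2_\rho}$: this is precisely where the non-staticity hypothesis enters, since $\ker L^*_{\gamma_o}=\{0\}$ and this persists under small perturbations, whence a standard compactness-and-contradiction argument — using the integration-by-parts structure that the weight $\rho$ is tailored to accommodate — yields the estimate. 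Elliptic theory for the fourth-order operator $P L^*$ then shows it is an isomorphism onto the appropriate weighted dual space, so $S:=L^*(P L^*)^{-1}$ is the desired right inverse, automatically landing in weighted spaces of tensors that vanish to infinite order at $\partial V$ and hence extend by zero to all of $M$.

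With $S$ in hand the nonlinear step is routine: using the quadratic estimate $\|Q(h_1)-Q(h_2)\|\lesssim(\|h_1\|+\|h_2\|)\|h_1-h_2\|$ in the weighted norms, the map $h\mapsto S\bigl((F-R(\gamma_b))-Q(h)\bigr)$ is a contraction on a small ball of the weighted space, the admissible radius being controlled by the smallness of $\|F-R(\gamma_b)\|$, i.e.\ by the $C^k$-closeness of $\tilde\gamma$ to $\gamma_o$. The fixed point $h$ is supported in $\overline V$ and small, so $\gamma_b+h$ is still a Riemannian metric and satisfies $R(\gamma_b+h)=F$; elliptic regularity on the interior of $\overline V$, together with the weighted decay at the two boundary components, upgrades $h$ to a smooth tensor, and $\gamma_s=\chi\gamma+(1-\chi)\tilde\gamma+h$ is the required smooth metric.

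I expect the genuine obstacle to be the weighted coercivity estimate for $L^*$ together with the balancing act it is entangled with: the weight $\rho$ must be strong enough to force the support and boundary regularity of $h$, yet weak enough that both the defect $F-R(\gamma_b)$ and the quadratic remainder $Q(h)$ still lie in the relevant weighted spaces with uniform bounds. This calibration, rather than any one inequality, is the substance of the Corvino--Schoen construction, so in practice one quotes it in the form established by Delay. It is worth emphasising in the write-up that ``not static'' is exactly the condition making $L^*$ injective on the weighted space, and that by \cite{BCS-05} it holds generically, so in applications one first perturbs the given metric slightly to arrange it.
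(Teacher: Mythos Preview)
The paper does not prove this statement: it is quoted verbatim (``slightly paraphrased'') from \cite{Delay-11} as a black-box tool, with no argument given beyond the one-line remark that the non-static hypothesis is generic by \cite{BCS-05}. Your sketch is a faithful outline of the Corvino--Schoen--Delay machinery that actually underlies the result --- the weighted right inverse $S=L^*(PL^*)^{-1}$, the coercivity of $L^*$ from the trivial static kernel, and the contraction-mapping closure --- so you have supplied strictly more than the paper does, and what you have supplied is correct in architecture.
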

In particular, fixing a metric $\gamma$ that is not static with nonnegative scalar curvature, any sufficiently $C^k$-close metric $\tilde\gamma$ that also has nonnegative scalar curvature can be glued to it in an annular region while preserving the nonnegativity of the scalar curvature.

Combining Miao's corner smoothing technique and the gluing results of Corvino, Schoen and Delay, we obtain the following result on gluing manifolds.

\begin{prop} \label{prop-cornersmooth}
	Let $(M,\gamma_+,\gamma_-)$ be a $C^{k,\tau}$ $3$-manifold with corner along $\Sigma$ as in Definition \ref{def-corner}, with $k\geq8$, and nonnegative scalar curvature.
	Assume that the mean curvature of $\Sigma$ with respect to $\gamma_-$ and $\gamma_+$, denoted $H_-$ and $H_+$ respectively, satisfy
	\be 
		H_+\leq H_-.
	\ee

	In addition, assume that there is a neighbourhood $N$ of $\Sigma$ where $\gamma_-,\gamma_+$ are not static.\\
	
	 Then there exists a $C^k$ metric on $M$ that is identically $\gamma_-$ on $\Omega\setminus N$ and identically $\gamma_+$ on $M\setminus (\Omega\cup N)$, with nonnegative scalar curvature everywhere.
\end{prop}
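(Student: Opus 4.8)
The plan is to follow the two-step strategy announced in the preamble: first mollify the metric across the corner $\Sigma$ in a small tubular neighbourhood, correcting the (controlled) negative scalar curvature that appears, and then use Theorem~\ref{thm-Delay} to glue the original metrics $\gamma_-$ and $\gamma_+$ back onto this smoothed piece away from $\Sigma$, so that the final metric is genuinely $\gamma_\pm$ outside a neighbourhood $N$ of $\Sigma$. Concretely, I would pick nested tubular neighbourhoods $U_1\Subset U_2\Subset U_3\Subset N$ of $\Sigma$, each diffeomorphic to $(-1,1)\times\Sigma$ via the $\gamma_-$-normal exponential map on one side and the $\gamma_+$-normal exponential map on the other (using Fermi/geodesic coordinates $x^0$ transverse to $\Sigma$, so that $\gamma_\pm = (dx^0)^2 + g_\pm(x^0)$ with $g_\pm(0)$ the common induced metric on $\Sigma$). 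On this product I would form the mollified metric $\gamma_\eta := (dx^0)^2 + g_\eta(x^0)$, where $g_\eta(x^0)$ is obtained by convolving the (Lipschitz in $x^0$, since $H_+\le H_-$ need not force $C^1$) family $x^0\mapsto g_-(x^0)\mathbf 1_{x^0\le0} + g_+(x^0)\mathbf 1_{x^0>0}$ against a standard mollifier of width $\eta$ in the $x^0$-variable only.

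The scalar curvature of $\gamma_\eta$ is then exactly what Miao computes in \cite{Miao02}, Section~3: away from the $O(\eta)$-slab it equals $R(\gamma_\pm)\ge0$, and inside the slab $R(\gamma_\eta)$ has a piece of size $O(1/\eta)$ coming from the jump in $\partial_{x^0}g$ across $\Sigma$, but with a \emph{definite sign} — it is nonnegative precisely because $H_-\ge H_+$ — plus an $O(1)$ error, so that $R(\gamma_\eta)\ge -C_0$ uniformly in $\eta$, and moreover $R(\gamma_\eta)\ge0$ outside an $O(\eta)$ slab. Running Miao's conformal correction $u_\eta>0$ solving $8\Delta_{\gamma_\eta}u_\eta - R(\gamma_\eta)u_\eta = 0$ with $u_\eta\to1$ at infinity (or simply on a large compact piece, since we only need it near $\Sigma$) gives a $C^k$ metric $\widetilde\gamma_\eta := u_\eta^4\gamma_\eta$ with $R(\widetilde\gamma_\eta)\ge0$ everywhere; the key quantitative input is Miao's estimate that $u_\eta\to1$ in $C^{k}$ on any fixed compact set as $\eta\to0$, because the negative part of $R(\gamma_\eta)$ concentrates on a set of measure $O(\eta)$ while staying bounded below in the good direction. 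Hence for $\eta$ small, $\widetilde\gamma_\eta$ is $C^k$-close to $\gamma_-$ on a fixed annular region inside $\Omega\cap N$ and $C^k$-close to $\gamma_+$ on a fixed annular region inside $(M\setminus\overline\Omega)\cap N$.

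Now I invoke Theorem~\ref{thm-Delay} twice. On the $\gamma_-$ side, apply it with $M$ replaced by (a neighbourhood in) $\Omega$, the background metric $\gamma_o=\gamma_-$ (which is not static on $N$ by hypothesis), and $\widetilde\gamma=\widetilde\gamma_\eta$: since $\widetilde\gamma_\eta$ is $C^k$-close to $\gamma_-$ on the annulus $\overline{V_2\setminus V_1}$ sitting between $\Sigma$ and the inner boundary of $N$, and since $k\ge8\ge\lceil 3/2\rceil+6$, we get a compactly-supported correction tensor $h_-$ so that the glued metric equals $\gamma_-$ on the inner region $\Omega\setminus N$, equals $\widetilde\gamma_\eta$ near $\Sigma$, and has scalar curvature the convex interpolation $\chi_- R(\gamma_-)+(1-\chi_-)R(\widetilde\gamma_\eta)\ge0$. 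Repeat symmetrically on the $\gamma_+$ side with background $\gamma_+$ to glue in the exterior, obtaining a metric equal to $\gamma_+$ on $M\setminus(\Omega\cup N)$. The resulting metric on $M$ is $C^k$, equals $\gamma_-$ on $\Omega\setminus N$, equals $\gamma_+$ on $M\setminus(\Omega\cup N)$, and has $R\ge0$ everywhere.

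I expect the main obstacle to be the uniform control of Miao's conformal factor: one must verify that the constant $C_0$ bounding the negative part of $R(\gamma_\eta)$ from below, and the measure $O(\eta)$ of the region where that negative part lives, combine — via the maximum principle and elliptic estimates for the equation $8\Delta_{\gamma_\eta}u_\eta = R(\gamma_\eta)u_\eta$ — to give $\|u_\eta-1\|_{C^k(K)}\to0$ on the fixed annuli $K$ where Theorem~\ref{thm-Delay} is to be applied. This is exactly the technical heart of \cite{Miao02}; here the only new wrinkle is that we need $C^k$ (not merely $C^0$ or $C^2$) closeness of $u_\eta$ to $1$ away from the slab, which follows from interior Schauder estimates once the $C^0$ bound is in hand, since $R(\gamma_\eta)$ is itself smooth and bounded in $C^{k-2}$ on each fixed annulus. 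A secondary point requiring care is checking that the non-staticity hypothesis on $N$ is genuinely what Theorem~\ref{thm-Delay} needs — i.e.\ that the gluing annuli $\overline{V_2\setminus V_1}$ can be chosen inside the region where $\gamma_\pm$ are assumed non-static — but this is immediate from shrinking $N$ at the outset.
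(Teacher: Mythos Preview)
Your proposal is correct and follows essentially the same approach as the paper: use Miao's mollification-plus-conformal-correction to produce a smooth metric with nonnegative scalar curvature that is $C^k$-close to $\gamma_\pm$ away from $\Sigma$ (the paper explicitly notes that $C^k$ rather than $C^2$ convergence follows from Miao's proof when the data is $C^{k,\tau}$), and then apply Theorem~\ref{thm-Delay} on each side to restore the original metrics outside $N$. The only difference is that the paper treats Miao's construction as a black box, whereas you unpack it; the ``main obstacle'' you anticipate is precisely what is absorbed into that citation.
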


\begin{proof}
	First recall that the proof of the main theorem in \cite{Miao02} is obtained by constructing a sequence of metrics $\tg_\delta$ that converges to to a given $C^{2}$ metric $g$ in the $C^0$ topology and moreover, converges in $C^2$ away from the corner, $\Sigma$. However, there it is assumed that $(M,\gamma_-,\gamma_+)$ is only of $C^{2,\tau}$ regularity away from $\S$. It is clear from the proof, particularly the proof of Proposition 4.1 therein, that if $(M,\gamma_-,\gamma_+)$ is assumed to be $C^{k,\tau}$ for some $k\geq2$, then the convergence away from $\S$ is with respect to the $C^k$ topology (see also the proof of Theorem 2 therein, on page 1180).
	
	We therefore let $\tg_\delta$ be such an approximating sequence of metrics converging (as $\delta$ goes to zero) to $(\gamma_-,\gamma_+)$ in $C^k$ (away from the corner, with $C^0$ convergence at the corner), which has nonnegative scalar curvature and ADM mass converging to that of $\gamma_+$.

	Let $N_\veps\subset N$ be a small neighbourhood of $\Sigma$ and define $U:=(\Omega\cap N)\setminus N_\veps$. In $U$, we know that $\gamma_-$ is not static, and $\tg_\delta$ converges to $\gamma_-$ in $C^k$. Choosing $\delta$ sufficiently small, we may then directly apply Theorem 1.1 of \cite{Delay-11} to obtain a smooth metric $\widehat g_\delta$ that is equal to $\gamma_-$ on $\Omega\setminus N$ and equal to $\tg_\delta$ on $M\setminus\overline{\Omega}$. Specifically, we set $V_1=\Omega\setminus \overline N$, $V_2=\Omega\setminus \overline{N_\veps}$ and $V_3=\Omega$ in Theorem \ref{thm-Delay}.\\
	
	By shrinking $\delta$ if necessary, the same argument may then be applied to glue $\gamma_+$ in the region $N\setminus \overline{\Omega\cup N_\veps} $. Following the essentially the same argument setting $V_1=M\setminus \overline{N\cup\Omega}$, $V_2=M\setminus \overline{N_\veps\cup\Omega}$ and $V_3=M\setminus\overline{\Omega}$.
\end{proof}

\begin{remark}
	A result of Brendle, Marques and Neves (\cite{BMN11}, Theorem 5) also gives conditions allowing the smoothing of such a corner. In their result, they require strict inequality between the mean curvature on each side of the corner and also may lose a small amount of scalar curvature in a neighbourhood of the corner. However, these limitations are mild and it should not be difficult to prove Proposition \ref{prop-cornersmooth} from their result instead.\footnote{After posting the first version of this article to arXiv, the author was made aware of very closely related work of Jauregui being completed independently \cite{J18}. Therein, Jauregui indeed does use the result of Brendle, Marques and Neves to prove a version of this result.}
\end{remark}

We would like to be able to smooth out the corner in such a way that leaves the entire region $\Omega$ unchanged. In this respect, the above proposition is not quite enough. We would therefore like to smoothly extend a given $\Omega$ by a small amount and then glue on an appropriate exterior region.

\begin{thm}\label{thm-collar-glue-in}
	Let $(M,\gamma_-,\gamma_+)$ be a $C^{k,\tau}$ ($k\geq8$) asymptotically flat $3$-manifold with nonnegative scalar curvature and corner along a hypersurface $\S$, bounding a domain $\Omega\subset M$. Assume the following:
	\begin{enumerate}[(i)]
		\item there exists a smooth manifold $(\Omega_o,\gamma_o)$ with nonnegative scalar curvature in which $(\Omega,\gamma_-)$ embeds isometrically with $\text{\emph{dist}}(\p\Omega,\p \Omega_o)>0$, such that $\gamma_o$ is not static outside $\Omega$;
		\item $\gamma_+$ is not static in some exterior region;
		\item the mean curvature on either side of the corner satisfies
		\bee 
			0<H_+\leq H_-;
		\eee 
		\item the Bartnik data $(\p\Omega,g_\S,H_-)$, where $g_\S$ is the restriction of $\gamma_\pm$ to $\p\Omega$, satisfies
		\bee \label{eq-convexity-cond2}
		R(g_\S)-2H_-\Delta_{g_\S}(\frac{1}{H_-})-\frac12 H_-^2>0.
		\eee 
	\end{enumerate} 
	
			Then, there exists a complete asymptotically flat manifold $(\widehat M,\widehat{\gamma})$ with nonnegative scalar curvature such that $\widehat M$ minus a compact set contains two disjoint component; one isometric to $(M\setminus K,\gamma_+)$, where $K$ is some other compact set containing $\overline{\Omega}$, and the other piece isometric to $(\Omega,\gamma_-)$.
\end{thm}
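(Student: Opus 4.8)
The plan is to follow the collar-gluing strategy of the introduction: we build $\widehat M$ by concatenating, in order, a slightly enlarged copy of $(\Omega,\gamma_-)$ sitting inside $\Omega_o$, a thin $\veps$-collar connection produced by Lemma~\ref{lemma-collar}, and a rescaled copy of the exterior $(M\setminus\overline\Omega,\gamma_+)$; we then smooth the two corners this creates using Proposition~\ref{prop-cornersmooth}, and finally re-glue $\gamma_+$ itself near infinity via Theorem~\ref{thm-Delay} so that the asymptotic end is literally a copy of $(M\setminus K,\gamma_+)$. Hypothesis (iv) is precisely condition~\eqref{eq-convexity-cond} for the boundary data $g_\S$, $H_-$, so Lemma~\ref{lemma-collar} is available; hypothesis (i) furnishes the room inside $\Omega_o$ needed to carry out a corner smoothing without disturbing $\Omega$; hypotheses (i) and (ii) supply the non-staticity required by Proposition~\ref{prop-cornersmooth} and Theorem~\ref{thm-Delay}; and hypothesis (iii) enters the mean-curvature check at the new corner.

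In detail: fix a small $\veps>0$ and let $\delta>0$ be the constant provided by Lemma~\ref{lemma-collar} for the base data $g_\S$, $H_-$. Using (i), enlarge $\Omega$ to a domain $\Omega^+$ with $\Omega\Subset\Omega^+\Subset\Omega_o$ and $\partial\Omega^+$ at small distance from $\S$, small enough that the Bartnik data $(g^+,H^+)$ induced on $\partial\Omega^+$ by $\gamma_o$ lies in $\mathcal{G}_\delta\times\mathcal{H}_\delta$; on $\Omega^+$ we keep $\gamma_o$, which restricts to $\gamma_-$ on $\Omega$. Set $\mu^2:=|\partial\Omega^+|_{g^+}/|\S|_{g_\S}$, which is close to $1$, and let $\C$ be an $\veps$-collar connection from $(\S,g^+,H^+)$ to $\mu^2 g_\S$ obtained from Lemma~\ref{lemma-collar}; its outer boundary $\S_1$ carries the metric $(1+\veps)^2\mu^2 g_\S$ and mean curvature $\widehat H^+$ with $H^+>\widehat H^+>\tfrac{1}{1+\veps}H^+$. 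Glue $\Omega^+$ to $\C$ along $\partial\Omega^+=\S_0$; there the induced metrics and the mean curvatures agree, so the metric is $C^1$ across the interface, and Proposition~\ref{prop-cornersmooth} smooths it on a neighbourhood of $\partial\Omega^+$ which can be taken disjoint from $\Omega$ (non-staticity on the $\Omega^+$ side is (i); on the collar side it can be arranged by a small perturbation, harmless because the collar has strictly positive scalar curvature). Then rescale $(M\setminus\overline\Omega,\gamma_+)$ by $(1+\veps)^2\mu^2$, so that its boundary metric becomes $(1+\veps)^2\mu^2 g_\S$, glue it along $\S_1$, smooth this corner with Proposition~\ref{prop-cornersmooth} using (ii), and, in an annulus near infinity where $\gamma_+$ is not static, use Theorem~\ref{thm-Delay} to interpolate $(1+\veps)^2\mu^2\gamma_+$---which is $C^k$-close to $\gamma_+$---back to $\gamma_+$; since both metrics have nonnegative scalar curvature, this preserves $R\geq0$. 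The resulting $(\widehat M,\widehat\gamma)$ is complete and asymptotically flat with $R\geq0$, contains the undisturbed copy of $(\Omega,\gamma_-)$, and, outside a suitable compact set $K\supset\overline\Omega$, is isometric to $(M\setminus K,\gamma_+)$.

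The step needing care---and the reason \eqref{eq-convexity-cond} is imposed with a strict inequality---is verifying the mean-curvature hypothesis of Theorem~\ref{thm-PMTcorner} at the corner $\S_1$: with normals pointing towards infinity, the collar side has mean curvature $\widehat H^+$ and the rescaled exterior side has $\tfrac{1}{(1+\veps)\mu}H_+$, so one needs $\widehat H^+\geq\tfrac{1}{(1+\veps)\mu}H_+$, which amounts essentially to $\mu H^+\geq H_+$. As $\Omega^+$ shrinks down to $\Omega$ one has $\mu\to 1$ and $H^+\to H_-$, and $H_-\geq H_+$ by (iii), so this holds comfortably wherever $H_->H_+$ but is borderline at points where $H_-=H_+$. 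It is rescued by the fact that the strict inequality \eqref{eq-convexity-cond} forces $\chi>\tfrac12 H_1^2$ strictly in the proof of Lemma~\ref{lemma-collar} (there $H_1$ plays the role of $H^+$), which in turn makes $\widehat H^+-\tfrac{1}{1+\veps}H^+$ bounded below by a fixed positive multiple of $\veps$; having fixed $\veps$, one then takes $\Omega^+$ close enough to $\Omega$ that the $O(\mathrm{dist}(\partial\Omega,\partial\Omega^+))$ discrepancies introduced by the enlargement and the rescalings are dominated by this $\veps$-sized reserve, and the inequality follows. Apart from this, the two corner smoothings and the final Corvino--Schoen--Delay gluing are routine, given the non-staticity supplied by (i), (ii) and the collar perturbation above.
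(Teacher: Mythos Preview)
Your proof is correct and follows essentially the same approach as the paper: enlarge $\Omega$ slightly inside $\Omega_o$, attach an $\veps$-collar connection from Lemma~\ref{lemma-collar}, glue the rescaled exterior, smooth both corners via Proposition~\ref{prop-cornersmooth}, and finally re-glue the unscaled $\gamma_+$ near infinity with Theorem~\ref{thm-Delay}. Your notation $\Omega^+$, $\mu^2$, $g^+$, $H^+$ corresponds to the paper's $\Omega_\delta$, $\zeta$, $g_1$, $H_1$, and the structure of the argument is identical.

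Two small remarks. First, the aside that matching induced metric and mean curvature makes the glued metric ``$C^1$ across the interface'' is not correct (you would need the full second fundamental form to match); but this is immaterial, since you correctly invoke Proposition~\ref{prop-cornersmooth}, which only requires $H_+\le H_-$, and at the inner interface you in fact have equality. Second, you are more explicit than the paper on two points the paper glosses over: that non-staticity on the collar side must be arranged by a small perturbation (harmless because the collar has strictly positive scalar curvature), and that the margin $\widehat H^+-\tfrac{1}{1+\veps}H^+$ coming from \eqref{eq-H-collar-est} is bounded below by a positive quantity of order $\veps$, which absorbs the $o(1)$ discrepancy between $\mu H^+$ and $H_+$ at points where $H_-=H_+$. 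These are welcome clarifications rather than deviations.
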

Intuitively, one can view this as a way to slightly nudge the corner away from $\Omega$ before smoothing it.

\begin{proof}
	Let $(\Omega_o,\gamma_o)$ be a manifold with nonnegative scalar curvature, containing $(\Omega,\gamma_-)$ as described above, and let $(\Omega_\delta,\gamma_\delta)$ be the manifold obtained by intersection $\Omega_o$ with a $\delta$-neighbourhood of $\Omega$. We will be interested in choosing $\delta$ small, so as to only extend $\Omega$ by a small amount. Denote by $g_1$ the induced metric on $\p \Omega_\delta$, and denote by $H_1$ the mean curvature of $\p\Omega_\delta$. We would like to ensure that $g_1$ is close to $g_\S$ in $C^{2,\tau}$, which can be done by taking $\delta$ sufficiently small. Similarly, we can also ensure that $H_1$ is close to $H_-$ in $C^2$.
	
	We aim to use condition (ii) to construct an $\veps$-collar connection from $g_1$ to $g_\S$. We therefore define $g_2:=\zeta g_\S$, where $\zeta:=|\S|_{g_1}|\S|_{g_\S}^{-1}$, so that the area of $\S$ with respect to $g_1$ is the same as that with respect to $g_2$. Note that $\zeta$ approaches $1$ as $\delta$ approaches zero.
	
	We fix some $\veps>0$, and in what follows we will continually shrink $\delta$ to ensure certain conditions are met. First, we ensure $\delta$ is small enough to apply Lemma \ref{lemma-collar} to construct an $\veps$-collar connection $\C$ from $(\S,g_1,H_1)$ to $g_2$. The Bartnik data on one boundary component of $\C$ is exactly $(\S,g_1,H_1)$ while the boundary data on the other boundary component is $(\S,(1+\veps)^2g_2,\widehat{H})$, where $\widehat{H}>(1+\veps)^{-1}H_1$. In particular, we have
	\bee 
		\frac{1}{1+\veps}H_+\leq \frac{1}{1+\veps}H_-<\widehat{H}+\frac{H_--H_1}{1+\veps}.
	\eee 
	By shrinking $\delta$ further, we can therefore ensure that we have 
	\be \label{eq-Hest}
	\widehat{H}>\frac{1}{1+\varepsilon}H_-.
	\ee 	
	
	In order to glue this collar to the exterior manifold $N:=M\setminus\overline{\Omega}$, consider the rescaled exterior metric
	\bee 
		\gamma_E:=\zeta\(1+\veps\)^2\gamma_+,
	\eee
	which has induced boundary metric $(1+\veps)^2g_2$ and mean curvature $H_E=\zeta^{-1/2}(1+\veps)^{-1} H_+$. By \eqref{eq-Hest} we then have
	\bee 
				H_E<\zeta^{-1/2}\widehat{H},
	\eee 
	so by further shrinking $\delta$ if necessary, we can ensure $H_E<\widehat{H}$, as required for the gluing.
	
	That is, we have shown that for any $\veps>0$, we can choose $\delta>0$ sufficiently small to ensure the existence of a collar manifold with two boundary components; the first inducing Bartnik data $(\S,g_1,H_1)$ and the second inducing the Bartnik data $(\S,\zeta(1+\veps)^2 g_\S,\widehat{H})$, where $\widehat{H}>H_E$. It should be kept in mind that $g_1$ and $H_1$ depend on $\delta$, and approach $g_\S$ and $H_-$ respectively, as $\delta$ goes to zero.
	
	Now, we may apply Proposition \ref{prop-cornersmooth} to obtain a smooth Riemannian manifold $(\widehat{M},\widehat{\gamma})$ containing one subset that is isometric to $(\Omega,\gamma_-)$, and another subset that is isometric to an exterior region in $M$ with the metric $\zeta(1+\veps)^2\gamma_+$.
	
	To complete the proof, we simply note that we can choose $\veps$ sufficiently small -- and in turn, $\delta$ small -- to ensure that $\zeta(1+\veps)^2\gamma_+$ is $C^k$-close enough to $\gamma_+$ to apply Theorem \ref{thm-Delay} and glue $\gamma_+$ to the exterior.
	
\end{proof}
We briefly explain the hypotheses (i)--(iv) in Theorem \ref{thm-collar-glue-in}. Condition (i) is rather weak, and in the context of the Bartnik mass it is entirely natural, as one would generally take $\Omega$ to be a bounded subset of a Riemannian manifold with nonnegative scalar curvature. Condition (ii) is required for standard gluing techniques. Condition (iii) is the usual mean curvature condition for a manifold with corners. Finally, condition (iv) is simply enforced in order to ensure the existence of an $\veps$-collar connection. As mentioned at the end of the preceding section, it should be possible to replace this last condition by constructing different kinds of $\veps$-collar connections.

\section{Equivalence of definitions} \label{S-defns}
As mentioned in Section \ref{S-intro}, the term ``Bartnik mass" often is used in the literature to indicate slightly different, conjecturally equivalent, quantities. In this section, we use the tools developed in Section \ref{S-collar} to give conditions under which some of these definitions are indeed equivalent.

Let $(\Omega,\gamma)$ be a compact manifold with boundary and nonnegative scalar curvature. One of the points of discrepancy in the definitions of the Bartnik mass is in choosing a non-degeneracy condition. The two most common non-degeneracy conditions in the literature, on an extension $(M,\widehat\gamma)$ of $(\Omega,\gamma)$ are:
\begin{enumerate}[(I)]
	\item there exists no close minimal surfaces in $M$ containing $\Omega$,
	\item the boundary $\p M$ is outer minimising in $M$.
\end{enumerate}

In what follows, when we speak of the non-degeneracy condition, we will be referring to (II) above. It is not at all obvious how to ensure condition (I) is preserved in constructions like those considered here, and in fact Jauregui gives a nice discussion on the problems that could arise in \cite{J18}.

Another point of discrepancy between definitions of the Bartnik mass is in essence, whether or not the admissible extensions are permitted to have a corner at $\p\Omega$. We define three spaces $\mathcal{A}_i$ of manifolds that are sometimes called admissible extensions of $(\Omega,\gamma)$.\\

\begin{enumerate}[1.]
	\item 
 Define the set $\mathcal{A}_1(\Omega,\gamma)$ as the set of smooth asymptotically flat manifolds, with nonnegative scalar curvature, in which $(\Omega,\gamma)$ embeds isometrically, satisfying the non-degeneracy condition.\\
 
\item 
Define the set $\mathcal{A}_2(\Omega,\gamma)$ as the set of smooth asymptotically flat manifolds with a corner along a hypersurface $\Sigma$, with nonnegative scalar curvature, such that $\Sigma$ bounds a domain isometric to $(\Omega,\gamma)$, with mean curvature on either side of the corner satisfying $H_-=H_+$, and satisfying the non-degeneracy condition.\\

\item 
Define the set $\mathcal{A}_3(\Omega,\gamma)$ as the set of smooth asymptotically flat manifolds with a corner along a hypersurface $\Sigma$, with nonnegative scalar curvature, such that $\Sigma$ bounds a domain isometric to $(\Omega,\gamma)$, with mean curvature on either side of the corner satisfying $H_-\geq H_+$, and satisfying the non-degeneracy condition.
\end{enumerate}

We now define three notions of Bartnik mass, associated to each of these definitions of admissible extensions that appear in the literature. That is, by a slight abuse of notation, we define
\be 
	\m_i(\Omega,\gamma):=\inf\{\m_{ADM}(M): M\in\mathcal{A}_i \}.
\ee

\begin{remark}
	Rather than explicitly considering manifolds with corner in the definitions $\mathcal{A}_2$ and $\mathcal{A}_3$, one usually considers manifolds with boundary, whose boundary induces Bartnik data corresponding to that of $\p\Omega$. This allows one to write, the Bartnik mass as a functional on the space of Bartnik data rather than on a space of compact $3$-manifolds with boundary. That is, in the case of the $\m_2$ and $\m_3$, often we speak of the Bartnik mass of given Bartnik data $(\S,g,H)$, denoted $\m_i(\S,g,H)$, rather than the Bartnik mass of a compact manifold with boundary.
\end{remark}

	We remark that the sets of extensions considered in the three definitions are subsets of each other. In particular, it is clear that we have
	\be 
	\m_1\geq \m_2\geq \m_3.
	\ee 
	Theorem \ref{thm-defns} below gives conditions under which we can say that these three definitions are in fact equal.

\begin{thm}\label{thm-defns}	Let $(\Omega,\gamma)$ be a compact $3$-manifold with nonnegative scalar curvature and boundary $\p\Omega=\S$. Assume the following:
	\begin{enumerate}[(i)]
		\item there exists a smooth manifold $(\Omega_o,\gamma_o)$ with nonnegative scalar curvature in which $(\Omega,\gamma)$ embeds isometrically with $\text{\emph{dist}}(\p\Omega,\p \Omega_o)>0$, such that $\gamma_o$ is not static outside $\Omega$ and $\p\Omega$ is strictly outer-minimising;
		\item the Bartnik data $(\S,g,H)$, where $g$ is the restriction of $\gamma$ to $\S=\p\Omega$, satisfies
		\bee 
		R(g)-2H\Delta_g(\frac{1}{H})-\frac12 H^2>0;
		\eee 
	\end{enumerate} 
	
	Then
	\bee
		\m_1(\Omega,\gamma)=\m_2(\p\Omega,g,H)=\m_3(\p\Omega,g,H).
	\eee 
\end{thm}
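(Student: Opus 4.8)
The chain $\m_1\ge\m_2\ge\m_3$ is immediate from the inclusions $\mathcal{A}_1(\Omega,\gamma)\subset\mathcal{A}_2(\Omega,\gamma)\subset\mathcal{A}_3(\Omega,\gamma)$, so everything reduces to proving $\m_1(\Omega,\gamma)\le\m_3(\p\Omega,g,H)$. The plan is to start from a near-minimising element of $\mathcal{A}_3$ — which is precisely a smooth asymptotically flat manifold with corner $(M,\gamma_-,\gamma_+)$ along $\S=\p\Omega$ with $\gamma_-$ inducing $(\Omega,\gamma)$ and $H_-\ge H_+$ — and feed it into Theorem \ref{thm-collar-glue-in} to manufacture a genuine boundaryless smooth asymptotically flat manifold $(\widehat M,\widehat\gamma)$ lying in $\mathcal{A}_1(\Omega,\gamma)$ whose ADM mass equals that of $M$. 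Taking the infimum over such $M$ then gives $\m_1\le\m_3$, and with the trivial inequalities all three quantities coincide.

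First I would check that any $M\in\mathcal{A}_3(\Omega,\gamma)$ satisfies the hypotheses of Theorem \ref{thm-collar-glue-in}: hypothesis (i) there is verbatim hypothesis (i) of the present theorem (the extra strict outer-minimising clause being reserved for the end); hypothesis (iii) is the defining relation $H_-\ge H_+$ of $\mathcal{A}_3$, with $H_->0$ forced by hypothesis (ii) (where $1/H$ appears) and $H_+>0$ by the mean-convexity implicit in the non-degeneracy condition; and hypothesis (iv) is exactly hypothesis (ii) of the present theorem applied to $(\S,g,H_-)=(\S,g,H)$. The only point needing attention is hypothesis (ii) of Theorem \ref{thm-collar-glue-in}, that $\gamma_+$ be non-static in some exterior region. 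Since static metrics are non-generic (Beig--Chru\'sciel--Schoen), if this fails I would replace $M$ by a small perturbation in a compact annulus far from both $\S$ and infinity, correcting any loss of scalar curvature by a Corvino--Schoen-type conformal/gluing adjustment; this standard cofinality argument changes $\m_{ADM}(M)$ by less than any prescribed amount and leaves the corner data and outer-minimising property undisturbed, so it does not affect the infimum.

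Granting this, Theorem \ref{thm-collar-glue-in} produces a complete smooth asymptotically flat $(\widehat M,\widehat\gamma)$ with $R(\widehat\gamma)\ge0$ containing a region isometric to $(\Omega,\gamma)$ and an exterior region isometric to $(M\setminus K,\gamma_+)$; in particular $\m_{ADM}(\widehat M,\widehat\gamma)=\m_{ADM}(M\setminus K,\gamma_+)=\m_{ADM}(M,\gamma_-,\gamma_+)$ by Definition \ref{def-corner}. It remains to verify $\widehat M\in\mathcal{A}_1(\Omega,\gamma)$, i.e.\ that $\p\Omega$ is outer-minimising in $(\widehat M,\widehat\gamma)$; this is where the strictness in hypothesis (i) is used. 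One decomposes $\widehat M\setminus\mathrm{int}\,\Omega$ into three pieces: the shell $\Omega_\delta\setminus\mathrm{int}\,\Omega$ sitting inside $(\Omega_o,\gamma_o)$, where $\p\Omega$ is \emph{strictly} outer-minimising; the $\veps$-collar $\C$, all of whose leaves $\S_t$ are mean convex so that area is monotone non-decreasing moving outward from $\S_0$ (which has least area $|\S|_{g_1}$ among homologous surfaces in $\C$); and the rescaled exterior, where the boundary is outer-minimising because $\p M$ is outer-minimising for $\gamma_+$ and this property is scale-invariant. Since the areas match at the junctions — $|\S_1|_{\widehat\gamma}=(1+\veps)^2|\S|_{g_2}=(1+\veps)^2|\S|_{g_1}\ge|\S|_{g_1}$ — any competitor surface enclosing $\Omega$, split against these three regions, has area at least $|\p\Omega|$, with equality forcing it to coincide with $\p\Omega$ by the strictness on $\Omega_o$. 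Hence $\widehat M\in\mathcal{A}_1$ and $\m_1\le\m_{ADM}(M)$, and passing to the infimum over $M\in\mathcal{A}_3$ completes the proof.

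I expect the outer-minimising verification of the last paragraph to be the principal obstacle: it is exactly the point at which an earlier version of this argument was incomplete (cf.\ the acknowledgements), and the only step that is not essentially bookkeeping once Theorem \ref{thm-collar-glue-in} is available. The genericity/cofinality reduction for hypothesis (ii) of that theorem is routine but must be phrased with care so as not to destroy the corner or non-degeneracy conditions; all the remaining content — the collar construction, the corner smoothing, and the preservation of $R\ge0$ and of the ADM mass — is already packaged into Theorems \ref{thm-collar-glue-in} and \ref{thm-PMTcorner} and Proposition \ref{prop-cornersmooth}.
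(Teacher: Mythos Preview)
Your overall strategy matches the paper's exactly: reduce to $\m_1\le\m_3$, take a near-optimal extension in $\mathcal{A}_3$ (arranging non-staticity in the exterior by a generic perturbation), invoke Theorem \ref{thm-collar-glue-in}, and then verify that the outer-minimising condition survives. The hypothesis-matching and ADM-mass bookkeeping are fine.

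There is, however, a genuine gap in your outer-minimising verification. Your three-piece decomposition treats $\widehat M\setminus\Omega$ as if it were literally the shell $\Omega_\delta\setminus\Omega$, the $\veps$-collar $\C$, and the rescaled exterior, glued along corners. But the output of Theorem \ref{thm-collar-glue-in} is a \emph{smooth} manifold, produced by applying Proposition \ref{prop-cornersmooth} (Miao's mollification plus conformal correction, then a Corvino--Schoen--Delay gluing) in a neighbourhood of each interface, followed by a further Delay gluing in an exterior annulus. These operations genuinely perturb the metric near the interfaces, and the control near the corner is only $C^0$. Since mean curvature is a second-order quantity, the mean-convexity of the leaves $\S_t$ need not survive in the smoothed metric, so your ``area is monotone along the foliation'' step can fail precisely in the smoothed zones. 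Splitting a competitor surface against regions whose metrics have been altered does not, as written, give the claimed lower bound.

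The paper closes this gap by a different mechanism. It first chooses $\widehat\S=\p\Omega_\delta$ with a \emph{strict} area excess $\delta=|\widehat\S|-|\p\Omega|>0$ and observes that $\widehat\S$ is outer-minimising in the manifold \emph{with corners} (there your foliation argument is legitimate). It then uses the $C^0$ convergence of the smoothed metrics to the corner metric to conclude that, in the smoothed manifold, any competitor lying outside $\widehat\S$ has area at least $|\widehat\S|-\tfrac{\delta}{2}>|\p\Omega|$; surfaces crossing $\widehat\S$ are handled by the same $C^0$ estimate applied to their exterior portion. Between $\p\Omega$ and $\widehat\S$ the metric is identically $\gamma_o$ (the smoothing does not touch this region), so the strict outer-minimising hypothesis applies there directly. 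This $C^0$-plus-strict-buffer argument is the missing ingredient in your proposal; once you insert it, your proof and the paper's coincide.
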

\begin{proof}
	We simply must prove $\m_1\leq \m_3$. This follows from Theorem \ref{thm-collar-glue-in}.

	For any small $\mu>0$, we can construct an extension $(M,\gamma_\mu)\in\mathcal{A}_3(\Omega,\gamma)$ with ADM mass satisfying $\m_3\leq\m_{ADM}(M,\gamma_\mu)<\m_3+\mu$. Furthermore, it is clear we can arrange this so as to not be static in the exterior. From Theorem \ref{thm-collar-glue-in} we can then obtain $(\widehat M,\widehat \gamma_\mu)\in\mathcal{A}_1(\Omega,\gamma)$ with ADM mass $m_{ADM}(\widehat{M},\widehat\gamma_\mu)=m_{ADM}(M,\gamma_\mu)<\m_3+\mu$.
	
	 That is, provided the non-degeneracy condition is preserved, we have $\m_1<\m_3+\mu$, and taking $\mu$ to be arbitrarily small would complete the proof.
		
	However, it is a subtle point to check that this procedure can indeed be done without violating the non-degeneracy condition. Note that the proof of Theorem \ref{thm-collar-glue-in} involves performing a gluing away from $\p\Omega$; that is, somewhere in the local extension $(\Omega_o,\gamma_o)$. Since $\p\Omega$ is strictly mean convex, then there exists a $\widehat\S$ containing $\p\Omega$ that is also strictly mean convex with area strictly larger than that of $\p\Omega$. Set $\delta=|\widehat\S|-|\p\Omega|>0$, and we consider such a $\widehat{\S}$ to be the boundary of $\p\Omega_\delta$ where the $\veps$-collar connection is glued to in the proof of Theorem \ref{thm-collar-glue-in}. By construction, $\widehat\S$ is also outer-minimising in the manifold with corners that we obtain by attaching an $\veps$-collar connection to $\widehat\S$ and connecting it to $(M,\gamma_\mu)$. This is because the collars we construct are foliated by strictly convex surfaces and the asymptotically flat extension glued to the end of the collar has outer-minimising boundary. Recall now that the smoothing and gluing procedure results in a sequence of smooth metrics that are isometric to $\gamma_o$ near $\p\Omega$, converging to our manifold with corners in $C^0$. Therefore, it can be arranged that any surface $\S'$ containing $\widehat{\S}$ has area at least $|\widehat{\S}|-\frac{\delta}{2}>|\p\Omega|$. To show that $\p\Omega$ is outer-minimising in this smoothed extension, it only remains to show that there are no closed surfaces surrounding $\p\Omega$, crossing $\widehat{\S}$, that have area less than that of $\p\Omega$. However, by the same reasons as above, applied to the portion exterior to $\widehat{\S}$, of any surface surrounding $\p\Omega$, one can see that no such surface can exist. We therefore conclude that $\p\Omega$ is strictly outer-minimising in the smoothed manifold, provided that we are sufficiently close in $C^0$ to the manifold with corners that we constructed.

\end{proof}
It should be remarked here that the preservation of the non-degeneracy condition could be included directly in the statement of Theorem \ref{thm-collar-glue-in}. However, we elect to present this here for the sake of exposition. Furthermore, the situation considered here is covered by a more general result on preservation of this non-degeneracy condition obtained very recently by Jauregui (see Lemma 13 of \cite{J18}).

\begin{remark} \label{rmk-CMC}
	Given Bartnik data $(\S,g,H_1)$ and $(\S,g,H_2)$ with $0<H_2\leq H_1$, it is clear that $\mathcal{A}_3(\S,g,H_2)\subset\mathcal{A}_3(\S,g,H_1)$. It is therefore clear that $\m_3(\S,g,H_1)\leq \m_3(\S,g,H_2)$, and therefore under the hypotheses of Theorem \ref{thm-defns} the same can be said about the other definitions of Bartnik mass.
	
	In particular, under these hypotheses, we have
	\bee 
		\m_B(\S,g,H)\leq \m_B(\S,g,\min\limits_\S(H)),
	\eee
	and therefore any estimate of the Bartnik mass of a constant mean curvature (CMC) surface gives rise to estimates for non-CMC surfaces too. For example, the estimates obtained by Lin and Sormani \cite{L-S}, and those by Cabrera Pacheco, Cederbaum, Miao and the author \cite{CCMM}, can be used to estimate non-CMC Bartnik data, provided it satisfies the hypotheses of Theorem \ref{thm-defns}. Indeed one could repeat the argument given above to glue CMC extensions to non-CMC data directly.
\end{remark}

Note that the hypotheses of Theorem \ref{thm-defns} are not very restrictive in general. The following proposition gives conditions ensuring that an appropriate local extension exists.
\begin{prop}\label{prop-nonstat}
	Let $(\Omega,\gamma)$ be a smooth compact manifold with nonnegative scalar curvature and smooth strictly mean convex boundary $\p\Omega$. Assume there exists a smooth manifold $(\Omega_o,\gamma_o)$ in which $(\Omega,\gamma)$ isometrically embeds with $\text{\emph{dist}}(\p\Omega,\p \Omega_o)>0$ and nonnegative scalar curvature.
	
	Then there exists a $3$-manifold $(\widehat\Omega,\widehat\gamma)$ in which $(\Omega,\gamma)$ isometrically embeds with $\text{\emph{dist}}(\p\Omega,\p \Omega_o)>0$, satisfying:
	\begin{itemize}
		\item outside of $\Omega$, $\widehat{\gamma}$ has positive scalar curvature;
		\item $\widehat{\gamma}$ is not static outside of $\Omega$;
		\item $\p\Omega$ is strictly outer-minimising in $\widehat{\Omega}$.
	\end{itemize}
\end{prop}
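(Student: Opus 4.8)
The plan is to take $\widehat\Omega$ to be a thin one-sided $\delta$-neighbourhood $\Omega_\delta$ of $\Omega$ inside $\Omega_o$ (so the collar width $\mathrm{dist}_{\widehat\gamma}(\p\Omega,\p\widehat\Omega)$ stays positive), to leave $\gamma$ untouched on $\Omega$, and to modify $\gamma_o$ only on the collar $\Omega_\delta\setminus\overline\Omega$ in three steps: a conformal change producing strictly positive scalar curvature up to $\p\Omega$; a verification that $\p\Omega$ stays strictly outer-minimising; and a final $C^k$-small perturbation, supported in an interior annular shell, that destroys staticity. All constants below depend only on $(\Omega,\gamma_o)$, and every requirement is met by taking $\delta$ small enough.

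\emph{Step 1 (positive scalar curvature outside $\Omega$).} Let $d=d_{\gamma_o}(\cdot,\p\Omega)$, which is smooth on $\Omega_\delta\setminus\Omega$ for $\delta$ small, and set $\widehat\gamma_1:=u^4\gamma_o$ where $u\equiv1$ on $\Omega$ and $u:=1-e^{-1/d}$ on the collar; since $e^{-1/t}$ and all its derivatives vanish as $t\to0^+$, $u$ is smooth across $\p\Omega$ and $\widehat\gamma_1=\gamma$ on $\Omega$. In dimension three $R(\widehat\gamma_1)=u^{-5}\big(-8\Delta_{\gamma_o}u+R(\gamma_o)\,u\big)$, and with $\psi(t)=e^{-1/t}$ one computes $-\Delta_{\gamma_o}u=\big(\psi''(d)+\psi'(d)\,\Delta_{\gamma_o}d\big)=t^{-2}e^{-1/t}\big(t^{-2}-2t^{-1}+\Delta_{\gamma_o}d\big)\big|_{t=d}$. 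Since $t\mapsto t^{-2}-2t^{-1}$ is decreasing on $(0,1)$, the bracket is bounded below on $(0,\delta]$ by $\delta^{-2}-2\delta^{-1}-\sup_{\Omega_\delta\setminus\Omega}|\Delta_{\gamma_o}d|$, which is positive once $\delta$ is small (the supremum stays bounded, tending to $\sup_{\p\Omega}H_{\p\Omega}$ as $\delta\to0$). Hence $\Delta_{\gamma_o}u<0$ on the collar, and as $R(\gamma_o)\ge0$, $u>0$, we get $R(\widehat\gamma_1)>0$ on all of $\Omega_\delta\setminus\overline\Omega$. Note also $\|u-1\|_{C^k}\to0$ as $\delta\to0$, so $\widehat\gamma_1\to\gamma_o$ in $C^k$ on the collar.

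\emph{Steps 2 and 3 (outer-minimisation and non-staticity).} Because $\widehat\gamma_1=\gamma$ to infinite order along $\p\Omega$ and $\p\Omega$ is strictly mean convex, the $\widehat\gamma_1$-distance spheres $\{\mathrm{dist}_{\widehat\gamma_1}(\cdot,\p\Omega)=t\}$ have $\widehat\gamma_1$-mean curvature tending to $H_{\p\Omega}>0$ as $t\to0$; a standard continuity/ODE argument shows that for $\delta$ small this foliation of $\Omega_\delta\setminus\Omega$ is strictly mean convex, and then a first-variation/divergence computation with the vector field $\nabla^{\widehat\gamma_1}\mathrm{dist}_{\widehat\gamma_1}(\cdot,\p\Omega)$ shows that every surface enclosing $\p\Omega$ in $\Omega_\delta$ has strictly larger $\widehat\gamma_1$-area, i.e. $\p\Omega$ is strictly outer-minimising. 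Finally fix an annular shell $A=\{\tfrac\delta3\le d\le\tfrac{2\delta}3\}$, on which $R(\widehat\gamma_1)$ has a positive lower bound; by the genericity of non-static metrics (Beig--Chru\'sciel--Schoen, \cite{BCS-05}; cf. the discussion before Theorem~\ref{thm-Delay}) there is a $C^k$-small symmetric $2$-tensor $h$ supported in the interior of $A$ with $\widehat\gamma:=\widehat\gamma_1+h$ not static on $A$. Taking $\|h\|_{C^k}$ small enough retains $R>0$ on $\Omega_\delta\setminus\overline\Omega$ (as $h$ vanishes near $\p\Omega$ and near $\p\Omega_\delta$), retains the strict mean convexity of the foliation, hence the strict outer-minimisation of $\p\Omega$, and keeps $\widehat\gamma=\gamma$ on $\Omega$. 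Then $(\widehat\Omega,\widehat\gamma):=(\Omega_\delta,\widehat\gamma)$ has all three properties, with non-staticity holding on the exterior annulus $A$ --- which is exactly what is needed when this local extension is fed into Theorems~\ref{thm-collar-glue-in} and~\ref{thm-defns}.

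\emph{Main difficulty.} The crux is Step 1: the modification of $\gamma_o$ must vanish to infinite order along $\p\Omega$, so that $(\Omega,\gamma)$ still embeds isometrically, yet it must render the scalar curvature strictly positive at points of the collar arbitrarily close to $\p\Omega$, even where $R(\gamma)$ vanishes on $\p\Omega$. Thus the perturbation has to be genuinely present --- though extremely small --- in every neighbourhood of $\p\Omega$; the choice $u=1-e^{-1/d}$ succeeds precisely because $e^{-1/t}$ is strictly convex for small $t$ with $\psi''/\psi'\sim t^{-2}\to\infty$, so that $-\Delta_{\gamma_o}u>0$ strictly near $\p\Omega$ and the $-8\Delta u$ term in the conformal scalar-curvature formula dominates the possibly-vanishing curvature term. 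Steps 2 and 3 are then routine stability and genericity arguments.
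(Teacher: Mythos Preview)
Your argument is correct, but it takes a different route from the paper's own proof. The paper works in Fermi coordinates $\gamma_o=ds^2+g(t)$ near $\p\Omega$ and deforms by \emph{reparametrising} the normal direction: it sets $\gamma_\sigma=ds^2+g(\sigma(t))$ with $\sigma(t)=t-\int_0^t e^{-s^{-2}}\,ds$, so that the leading new term in the scalar curvature is $-\ddot\sigma\,\tr_{g}\dot g = 2t^{-3}e^{-t^{-2}}\tr_g\dot g$, which is strictly positive by the mean convexity of $\p\Omega$ and dominates near $t=0$. You instead make a \emph{conformal} change $u^4\gamma_o$ with $u=1-e^{-1/d}$ and use $-8\Delta_{\gamma_o}u>0$ to force positivity. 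Both exploit the same ``flat to infinite order but strictly present'' trick via $e^{-1/t}$; the paper leans on $H>0$, you lean on the conformal Laplacian, and either works.

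The main substantive difference is how non-staticity is obtained. The paper simply observes that its $R(\gamma_\sigma)$ is manifestly non-constant in $t$ (because of the explicit $e^{-t^{-2}}$ factors) and invokes the fact that a static metric has constant scalar curvature. Your $R(\widehat\gamma_1)$ is likewise non-constant in $d$, so you could have concluded non-staticity directly without the extra Step~3; invoking Beig--Chru\'sciel--Schoen and a supported perturbation $h$ is valid but heavier than necessary. Your treatment of the outer-minimising property via the strictly mean convex equidistant foliation is essentially the same as the paper's.
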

\begin{proof}
	In a neighbourhood of $\p\Omega$, the metric $\gamma_o$ can be expressed as
	\be 
		\gamma_o=ds^2+g(t),
	\ee 
	for $t\in[0,\veps)$, with $t=0$ corresponding to $\p\Omega$.

	The scalar curvature of $\gamma_o$ is then computed in this neighbourhood as
	\be 
		R(\gamma_o)=R(g(t))-\tr_{g(t)}\ddot{g}(t)-\frac14(\tr_{g(t)}g(t))^2+\frac34|\dot{g(t)}|_{g(t)}^2.
	\ee 
The proof is now essentially the proof of Lemma 2.3 in \cite{M-S}, where part of the Schwarzschild manifold was deformed to have strictly positive scalar curvature. We consider the deformed metric
	\be 
	\gamma_\sigma=ds^2+g(\sigma(t)),
	\ee 
where $\sigma$ is some increasing function. We then compute the scalar curvature of this new metric to obtain
	\be 
	R(\gamma_\sigma)=R(g(\sigma))-(\dot{\sigma})^2R(g(\sigma))-\ddot{\sigma}\tr_{g(\sigma)}\dot{g(\sigma)}+(\dot{\sigma})^2R(\gamma_o),
	\ee
where we write $\sigma$ to mean $\sigma(t)$. We now choose 
\bee 
\sigma(t)=t-\int_0^t\exp\( -s^{-2} \)ds,
\eee 
which gives
\begin{align*} 
	R(\gamma_\sigma)&=\exp(-t^{-2})\(R(g(\sigma))+2t^{-3}\tr_{g(\sigma)}\dot{g(\sigma)}\)+(\dot{\sigma})^2R(\gamma_o)\\
	&=\exp(-t^{-2})\(R(g(\sigma))-2R(\gamma_o)+\exp(-t^{-2})R(\gamma_o)+2t^{-3}\tr_{g(\sigma)}\dot{g(\sigma)}\)+R(\gamma_o).
\end{align*} 
Since $\dot{\sigma}(0)=1$ and all higher derivatives vanish at $t=0$, $\gamma_\sigma$ does indeed smoothly extend $\Omega$. Since $R(\gamma_o)>0$ is non-negative by assumption, clearly $R(\gamma_\sigma)>0$ for some neighbourhood of $\Omega_o$. Furthermore, it is now well-known (see \cite{Corvino-00}) that a static metric must have constant scalar curvature, so we can conclude this extension is not static. 

Finally, since $\p\Omega$ is mean convex, it is clear that all small outward perturbations of the boundary increase area so for a sufficiently small neighbourhood $\p\Omega$ is strictly outer-minimising.
\end{proof}

The question of when a manifold with boundary is extendible in this sense, to a larger manifold with non-negative scalar curvature, is non-trivial. It seems reasonable to conjecture that this is usually possible, however if at the boundary the scalar curvature is vanishing and strictly decreasing in the outward direction, then this clearly cannot be possible. However, Reiris has some interesting results demonstrating that a smooth manifold with strictly mean convex and compact boundary can be $C^2$ extended in such a way that preserves non-negativity of scalar curvature \cite{Reiris}. For the purposes here, we only require that the manifold be sufficiently smooth to perform the gluing away from the boundary. It is therefore likely that one could use the results of Reiris carefully to ensure a local extension suitable for our purposes always exists, however this is beyond the scope of the current work. We simply would like to emphasise that the local extendibility of $\Omega$ is a very weak assumption in our context. Indeed the Bartnik mass is generally defined for a subset of a Riemannian manifold with nonnegative scalar curvature so for the physically interesting case, the point is effectively moot.

\section{Continuity of the Bartnik mass}\label{S-Cont}
In this final section, we now turn to address the continuity of the Bartnik mass with respect to its Bartnik data. Throughout this section, we will focus on the case where all three definitions of Bartnik mass agree. For simplicity, we will therefore take the Bartnik mass to be that given by $\m_2$ in the preceding section and denote it simply by $\m_B$. As above, the key to the proof is the existence of an $\veps$-collar connections, which allow us to connect given Bartnik data to an admissible extension of nearby Bartnik data. Before stating the theorem, we recall a well-known fact about how the Bartnik mass behaves under scaling of the data.

Given a constant $\lambda>0$ the Bartnik mass satisfies
\be 
	\m_B(\S,\lambda^2g,\frac{1}{\lambda}H)=\lambda \m_B(\S,g,H).
\ee

This can be observed simply by noting that if $(M,\gamma)$ is an admissible extensions of the data $(\S,g,H)$ with mass $m$, then $(M,\lambda^2\gamma)$ is an admissible extension of the data $(\S,\lambda^2g,\frac{1}{\lambda}H)$ with mass $\lambda m$, which is seen directly from \eqref{eq-ADMdefn}.

In order to prove the main result of this section, we will first need to show that the Bartnik mass can be uniformly bounded in a small neighbourhood.
	
\begin{lemma} \label{lem-unibound}
	Let $(\Sigma,g_o,H_o)$ be given Bartnik data satisfying
	\be \label{eq-deltaHcond}
		R(g_o)-2\Delta_{g_o}\frac1H>0,\qquad\text{and}\qquad H>0.
	\ee 
	 Then there is a neighbourhood of $(g_o,H_o)$ in $C^{2,\tau}\times C^2$ and a constant $C$ such that the Bartnik mass on this neighbourhood is less than $C$.
\end{lemma}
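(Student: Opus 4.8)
The plan is to reduce the statement to a single explicit extension with controlled ADM mass, constructed via an $\veps$-collar connection. Given $(\Sigma, g_o, H_o)$ satisfying \eqref{eq-deltaHcond}, first I would fix a \emph{single} reference extension: build an $\veps$-collar connection from $(\Sigma, g_o, H_o)$ to $g_o$ itself (so $g_1 = g_2 = g_o$, the area condition $|\Sigma|_{g_1} = |\Sigma|_{g_2}$ being automatic), and then glue its outer end to a rescaled exterior region of some fixed asymptotically flat manifold with outer-minimising boundary — for instance a spatial Schwarzschild exterior of suitably chosen mass, whose boundary can be made to match the outer Bartnik data $(\Sigma, (1+\veps)^2 g_o, \widehat H)$ of the collar after a homothety, since by part (iii) of the definition of $\veps$-collar connection we have $\widehat H > \frac{1}{1+\veps} H_o$, and the mean curvature condition needed for Proposition \ref{prop-cornersmooth} can be met by choosing the Schwarzschild mass large enough (smaller boundary mean curvature on the exterior side). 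This produces one admissible extension $M_o$ of $(\Sigma, g_o, H_o)$ with some finite ADM mass $m_o$; set $C := m_o + 1$.

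The heart of the argument is then to show that \emph{the same construction survives perturbation of the data}. Let $\delta > 0$ be the constant furnished by Lemma \ref{lemma-collar} for the choice $\veps$ fixed above (applied with $g_o, H_o$ as the base point), shrunk if necessary so that the neighbourhoods $\mathcal{G}_\delta \times \mathcal{H}_\delta$ lie inside the open region where \eqref{eq-convexity-cond} holds. For any $(g, H) \in \mathcal{G}_\delta \times \mathcal{H}_\delta$, I would first rescale: replace $(\Sigma, g, H)$ by the homothetic data $(\Sigma, \lambda^2 g, \lambda^{-1} H)$ with $\lambda^2 = |\Sigma|_{g_o}/|\Sigma|_g$, so that the rescaled metric has the same area as $g_o$; note $\lambda \to 1$ and the rescaled data stays in a slightly larger neighbourhood of $(g_o, H_o)$ as $\delta \to 0$, so after shrinking $\delta$ once more we may assume the rescaled data still lies in $\mathcal{G}_{\delta'} \times \mathcal{H}_{\delta'}$ for whatever $\delta'$ Lemma \ref{lemma-collar} requires. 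Now apply Lemma \ref{lemma-collar} with $g_1 = \lambda^2 g$, $g_2 = g_o$ (these have equal area), and $H_1 = \lambda^{-1} H$ to obtain an $\veps$-collar connection from $(\Sigma, \lambda^2 g, \lambda^{-1} H)$ to $g_o$. Its outer boundary data is $(\Sigma, (1+\veps)^2 g_o, \widehat H_1)$ with $H_1 > \widehat H_1 > \frac{1}{1+\veps} H_1$; since $H_1$ is $C^2$-close to $H_o$, by shrinking $\delta$ we get $\widehat H_1$ close to a value $> \frac{1}{1+\veps} H_o$, hence strictly larger than the (fixed) exterior mean curvature $H_E$ we arranged above — possibly after enlarging the Schwarzschild mass once and for all. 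Then Proposition \ref{prop-cornersmooth} smooths the corner between collar and exterior, and Theorem \ref{thm-Delay} glues the genuine exterior back on, yielding an admissible extension of the rescaled data whose ADM mass equals that of the fixed exterior region, independent of $(g,H)$. Finally, undoing the rescaling via $\m_B(\Sigma, \lambda^2 g, \lambda^{-1} H) = \lambda\, \m_B(\Sigma, g, H)$ gives $\m_B(\Sigma, g, H) = \lambda^{-1} \m_B(\Sigma, \lambda^2 g, \lambda^{-1} H) \le \lambda^{-1} m_o$, and since $\lambda^{-1}$ is bounded on the neighbourhood (close to $1$), we obtain the uniform bound $\m_B < C$.

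The main obstacle I anticipate is \textbf{keeping the exterior region fixed while the collar's outer mean curvature $\widehat H_1$ varies with the data}: one must verify that a \emph{single} choice of asymptotically flat exterior (with a fixed, sufficiently small boundary mean curvature $H_E$) works uniformly for all $(g, H)$ in the neighbourhood, which is what ensures the ADM mass is uniformly bounded rather than merely finite for each datum. This is handled by first bounding $\widehat H_1$ from below uniformly — using the explicit inequality \eqref{eq-H-collar-est} from the proof of Lemma \ref{lemma-collar}, which shows $\widehat H_1 \ge \frac{1}{1+\veps}(\text{something bounded below in terms of } H_1, \chi)$, together with the $C^2$-closeness of $H_1$ to $H_o$ — and then choosing the exterior Schwarzschild mass large enough that $H_E$ falls below this uniform lower bound. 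A secondary technical point is ensuring the non-degeneracy (outer-minimising) condition for $\mathcal A_2$ holds for the smoothed extension uniformly; this follows exactly as in the proof of Theorem \ref{thm-defns}, since the collars are foliated by strictly mean convex leaves and the chosen exterior has outer-minimising boundary, and the $C^0$-closeness of the smoothed metric to the manifold-with-corners preserves the strict area comparison. None of these steps requires $(g,H)$-dependent choices of the exterior, so the ADM mass of the resulting extension is a fixed number, giving the desired uniform bound.
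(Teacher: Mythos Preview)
Your overall architecture --- collar plus fixed exterior, then perturb the inner data --- is close in spirit to the paper's, but there is a genuine gap at the step where you attach the exterior. You propose to cap the collar's outer boundary $(\Sigma,(1+\veps)^2 g_o,\widehat H)$ with ``a spatial Schwarzschild exterior of suitably chosen mass \dots\ after a homothety''. This cannot work as stated: a coordinate sphere in spatial Schwarzschild carries a \emph{round} induced metric, and a homothety keeps it round. The metric $(1+\veps)^2 g_o$ is round only in the trivial case that $g_o$ is; nothing in the hypotheses forces this. So the corner you want to smooth with Proposition~\ref{prop-cornersmooth} does not even exist --- the induced metrics on the two sides do not agree, which is the very first requirement in Definition~\ref{def-corner}. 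Replacing ``Schwarzschild'' by ``some admissible extension of $(\Sigma,(1+\veps)^2 g_o,H_E)$ with controlled mass'' is circular, since the lemma is precisely about producing such extensions.

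The paper's proof avoids this by running the collar not to $g_o$ but all the way to a \emph{round} metric of the same area. The price is that $\alpha$ is no longer small, so Lemma~\ref{lemma-collar} does not apply; instead one goes back to the explicit formula~\eqref{eq-Scalarcurv} and takes the Schwarzschild-model parameter $m\ll 0$, which drives the term $\tfrac12 H^2(1-\tfrac{2m}{r_o})^{-1}$ to zero and makes the scalar curvature positive under only the weaker hypothesis~\eqref{eq-deltaHcond}. This also explains a secondary discrepancy: your argument invokes Lemma~\ref{lemma-collar}, and hence implicitly the stronger inequality~\eqref{eq-convexity-cond}, whereas the lemma as stated assumes only~\eqref{eq-deltaHcond}. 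Once the collar terminates at a round sphere, a genuine Schwarzschild piece (bent slightly near the horizon to kill staticity, as in Lemma~2.3 of \cite{M-S} or Proposition~\ref{prop-nonstat}) does attach, and its mass is controlled by the area radius at the collar's end --- which varies continuously with the data and hence is uniformly bounded on a small neighbourhood.
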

\begin{proof}
	Take the neighbourhood to be small enough so that \eqref{eq-deltaHcond} holds for each $(g,H)$ in the neighbourhood. Now let $g(s)$ be the path constructed in Section \ref{S-collar}, with $g(0)=g$ and $g(1)$ being a round metric of the same area. Clearly in this case, $\alpha$ will not be small generically. Nevertheless, we define the metric $\gamma$ as in \eqref{eq-metricdefn} and recall the scalar curvature expression from \eqref{eq-Scalarcurv}:
	\bee
R(\gamma)=r_0^2\( v_m^{-2}\( R(g)-2H\Delta_g\frac{1}{H}-\frac{1}{2} H^2\(1-\frac{2m}{r_o}\)^{-1} \)-\frac{H^2}{16}\(1-\frac{2m}{r_o}\)^{-1}k^{-2}|\dot g|_g^2 \),
\eee 

where we again choose $A(x)$ as in Section \ref{S-collar} prescribe the mean curvature at $t=0$ to be $H$. The term
\bee 
R(g)-2H\Delta_g\frac{1}{H}-\frac{1}{2} H_1^2\(1-\frac{2m}{r_o}\)^{-1}
\eee
can again be made positive by choosing $m$ appropriately; in fact, for our purposes here we can take $m<<0$ so as to ensure, for whatever $k>0$ we choose, $R(\gamma)>0$.

That is, we can construct a collar manifold that realises the correct boundary data on one connected component of the boundary and is round with positive mean curvature at the other component. We can write the metric at the end of the collar as $g_1=r_1^2g_*$, where $r_1$ is positive constant (the area radius) and $g_*$ is the standard round metric of area $4\pi$. Let $\gamma_{r_1}$ be the Schwarzschild manifold of mass $r_1/2$ and note that the horizon of this manifold is isometric to $g_1$. By identifying the horizon with the end of our collar, we obtain a manifold with corner and if Schwarzschild was not static we could immediately apply Proposition \ref{prop-cornersmooth} to obtain an admissible extension with mass $r_1/2$. Note that, as in the proof of Theorem \ref{thm-defns}, this can be done so that the non-degeneracy condition is preserved.

This problem of the Schwarzschild manifold being static is easily circumvented by considering a Schwarzschild manifold with slightly larger mass and then bending it slightly near the horizon. This can even be done explicitly as in Lemma 2.3 of \cite{M-S} (or by applying Proposition \ref{prop-nonstat} appropriately). Clearly the area at the end of such a collar is controlled by the Bartnik data and therefore we have an upper bound for the Bartnik mass for all Bartnik data in the neighbourhood.

\end{proof}

We now are in a position to prove the main result of this section.
\begin{thm}\label{thm-cont}
Let $(\Sigma,g_o,H_o)$ be given Bartnik data such that 
	\be \label{eq-conthypothesis}
	R(g_o)-2H_o\Delta_{g_o}(\frac{1}{H_o})-\frac12 H_o^2>0.
	\ee 
If the non-degeneracy condition is taken to be (I), then further assume that the Bartnik mass satisfies $\m(\S,g_o,H_o)\leq \( \frac{|\S|}{16\pi} \)^{1/2}$.

Then the Bartnik mass is continuous at $(g_o,H_o)$ with respect to the $C^{2,\tau}\times C^2$ topology.
\end{thm}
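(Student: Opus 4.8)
The plan is to prove continuity by establishing upper and lower semi-continuity separately at $(g_o,H_o)$. The lower semi-continuity is the easy half: since $\mathcal{A}_i(\S,g,H)$ shrinks as $H$ increases, and since an $\veps$-collar connection lets us convert an admissible extension of nearby data $(g,H)$ into an admissible extension of $(g_o,H_o')$ for $H_o'$ slightly smaller than $H_o$, we can bound $\m_B(g_o,H_o)$ from above in terms of $\liminf$ of $\m_B$ over the neighbourhood, up to an error that vanishes with $\veps$. For the upper semi-continuity, the idea is symmetric: given an admissible extension $(M,\gamma)$ of $(\S,g_o,H_o)$ with mass close to $\m_B(g_o,H_o)$, we glue an $\veps$-collar connection (provided by Lemma \ref{lemma-collar}, whose hypothesis \eqref{eq-convexity-cond} is exactly \eqref{eq-conthypothesis}) onto its boundary so that the new boundary realises nearby data $(g,H)$, and the resulting extension's ADM mass differs from that of $(M,\gamma)$ by a factor $\zeta(1+\veps)^2$ close to $1$. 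Lemma \ref{lem-unibound} guarantees this whole construction is non-vacuous, i.e. the infimum defining $\m_B(g_o,H_o)$ is taken over a nonempty set with finite values, and the uniform bound controls the errors.

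\textbf{Order of steps.} First I would fix $(g,H)$ in a small $C^{2,\tau}\times C^2$ neighbourhood of $(g_o,H_o)$ and, given $\mu>0$, select an extension $(M,\gamma)\in\mathcal{A}_2(\S,g,H)$ with $\m_{ADM}(M,\gamma)<\m_B(g,H)+\mu$. Second, apply Lemma \ref{lemma-collar} with $g_1=g_o$ (rescaled to match area), $g_2=g$, $H_1=H_o$ to obtain an $\veps$-collar connection; note that when $(g,H)$ is $\delta$-close to $(g_o,H_o)$, both $g$ and $H$ lie in $\Gd\times\Hd$, so the collar exists, and $\veps\to 0$ as $\delta\to 0$. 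Third, use Proposition \ref{prop-cornersmooth} (equivalently the argument in Theorem \ref{thm-collar-glue-in}) to smooth the corner where the collar meets the rescaled exterior $\zeta(1+\veps)^2\gamma$, obtaining an admissible extension of $(\S,g_o,H_o)$ — here one must check the mean curvature inequality $\widehat H > H_E$ survives, exactly as in the proof of Theorem \ref{thm-collar-glue-in}, and that the non-degeneracy condition (II) is preserved by the foliation-by-convex-surfaces argument already used in Theorem \ref{thm-defns}. This yields $\m_B(g_o,H_o) \leq \zeta(1+\veps)^2(\m_B(g,H)+\mu)$; letting $\mu\to 0$ and then shrinking the neighbourhood gives $\m_B(g_o,H_o)\leq \liminf_{(g,H)\to(g_o,H_o)}\m_B(g,H)$ up to the $\zeta(1+\veps)^2$ factor, which $\to 1$. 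Fourth, run the same construction in reverse — starting from a near-optimal extension of $(\S,g_o,H_o)$ (which exists and has bounded mass by Lemma \ref{lem-unibound}), glue a collar connecting $g_o$ to $g$ and realise data close to $(g,H)$ — to get $\limsup_{(g,H)\to(g_o,H_o)}\m_B(g,H)\leq \m_B(g_o,H_o)$. Combining the two inequalities gives continuity.

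\textbf{Main obstacle.} The subtle point is the same one flagged throughout Section \ref{S-defns}: preserving the non-degeneracy condition (outer-minimising boundary, or the no-enclosing-minimal-surface condition (I)) under the gluing. For condition (II) this is handled as in the proof of Theorem \ref{thm-defns} — the collars are foliated by strictly convex surfaces and the exterior being glued has outer-minimising boundary, and $C^0$-closeness of the smoothed metric to the manifold-with-corner preserves area comparisons. For condition (I), however, rescaling the exterior and inserting a collar could in principle create a new enclosing minimal surface far out; this is why the theorem adds the hypothesis $\m(\S,g_o,H_o)\leq(|\S|/16\pi)^{1/2}$, which (via the Riemannian Penrose inequality applied to the would-be horizon) rules out small horizons enclosing $\Omega$ in the constructed extensions, so that condition (I) is maintained for all $(g,H)$ in a sufficiently small neighbourhood. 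A secondary technical nuisance is bookkeeping the interdependence of $\delta$, $\veps$, $\zeta$, and $\mu$: one fixes $\mu$ last, chooses $\veps$ small depending on the desired final tolerance, then $\delta$ small enough that Lemma \ref{lemma-collar} applies and $\zeta(1+\veps)^2$ is as close to $1$ as needed — but none of these choices are circular, since the uniform bound of Lemma \ref{lem-unibound} makes the multiplicative errors harmless.
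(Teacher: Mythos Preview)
Your approach is correct and matches the paper's: $\veps$-collar connections in each direction via Lemma~\ref{lemma-collar}, gluing via Theorem~\ref{thm-collar-glue-in} with the non-degeneracy check borrowed from the proof of Theorem~\ref{thm-defns}, and the scaling law for $\m_B$. The only refinements the paper makes that you gloss over are (a) extracting from \eqref{eq-H-collar-est} a uniform lower bound $\widehat H-\tfrac{1}{1+\veps}H_1\geq C(\delta_1,\veps)$ and then restricting $\delta\leq(1+\veps)C(\delta_1,\veps)$ so that the collar's outer mean curvature dominates $\tfrac{1}{1+\veps}H_2$ for \emph{every} $H_2\in\Hd$ --- your appeal to ``exactly as in Theorem~\ref{thm-collar-glue-in}'' does not quite cover this, since there the two mean curvatures being compared are induced on the \emph{same} hypersurface --- and (b) invoking the uniform bound of Lemma~\ref{lem-unibound} in the lower semi-continuity direction (your step~3), where the multiplicative error $\bigl((1+\veps)\zeta-1\bigr)\m_B(g,H)$ carries the \emph{variable} mass, rather than in step~4 where only the fixed quantity $\m_B(g_o,H_o)$ appears.
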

\begin{proof}
	In what follows, the non-degeneracy condition is preserved exactly as in the proof of Theorem \ref{thm-defns}. We omit repeating the details for the sake of exposition.
	
	Let $\mathcal{G}_\delta$ and $\mathcal{H}_\delta$ denote $\delta$-neighbourhoods of $(g_o,H_o)$ as in Lemma \ref{lemma-collar}.
	Fix some small $\mu>0$. We begin by choosing some $\veps>0$ small, which will be the parameter in our $\veps$-collar connections. Specifically, we choose $\veps<\frac{\mu}{3}\m_B(\S,g_o,H_o)^{-1}$.
	
	 Before proceeding, we must more closely look at the mean curvature $\widehat{H}$ of the end of the collar constructed in Lemma \ref{lemma-collar}. Given some Bartnik data $(\S,g,H)$, a collar is constructed with mean curvature $\widehat{H}$ at one boundary component, satisfying $\widehat{H}>\frac{1}{1+\veps}H$. In what follows, we would like to work uniformly in $H$, so we would like to have some control on the difference $\widehat{H}-\frac{1}{1+\veps}H$. From \eqref{eq-H-collar-est}, it can be seen that, fixing some $\veps,\delta_1>0$, for all $(g,H)\in\Gd\times\Hd$ there is a constant $C(\delta_1,\veps)$ such that the collars constructed by Lemma \ref{lemma-collar} satisfy
	\be 
		\widehat{H}-\frac{1}{1+\veps}H\geq C(\delta_1,\veps).
	\ee 
	Once this a priori $\delta_1$ has been chosen, we will restrict our choice of $\delta$ in what follows to satisfy
	\be 
		\delta\leq (1+\veps)C(\delta_1,\veps).
	\ee 
	This implies that for such $\widehat H$ associated to a given $(g,H)\in\Gd\times\Hd$ via our $\veps$-collar construction, any other $H_2\in\Hd$ satisfies $\widehat{H}>\frac{1}{1+\veps}H_2$.\\
	
	Recall, we seek to find $\delta>0$ such that for all $(g,H)\in\Gd\times\Hd$,
	\bee 
		|\m_B(\S,g,H)-\m_B(\S,g_o,H_o)|<\mu.
	\eee

	Pick some nearby $H\in\Hd$ and $g\in\Gd$ and we write $g_1=\zeta^2 g$, $\zeta^2:=|\S|_{g_o}|\S|_{g}^{-1}$, so that $g_1$ has the same area as $g_o$. For the purposes of later rescaling, we also define $H_1=\zeta^{-1}H$.
	
	Fix $\delta$ small enough so that to ensure that Lemma \ref{lemma-collar} applies to each such $(g_1,H_1)$ as defined above, then shrink $\delta$ as necessary to ensure we have the uniform control on the mean curvature at the end of the constructed collars, as described above. Note that this $\delta$ depends only on $g_o,H_o$ and $\mu$ (via $\veps$).
	
	This gives us a collar manifold $(\C\cong[0,1]\times\S,\gamma_{\C})$ that induces Bartnik data $(\S,g_1,H_1)$ and $(\S,(1+\veps)^2g_o,\widehat{H})$ on the boundary components, where $\widehat{H}>\frac{1}{1+\veps}H_o$.
	
	Now, we can construct an extension $(M,\gamma_{\veps,\mu})$ in $\mathcal{A}_2(\S,(1+\veps)^2g_o,\frac1{1+\veps}H_o)$ with ADM mass
	\begin{align*} 
		\m_{ADM}(M,\gamma_{\veps,\mu})&\leq \m_B(\S,(1+\veps)^2g_o,\frac{1}{(1+\veps)}H_o)+\frac{\mu}{3}\\
		&=(1+\veps)\m_B(\S,g_o,H_o)+\frac{\mu}{3}<\m_B(\S,g_o,H_o)+\frac{2\mu}{3},
	\end{align*}
	where we recall the original condition on $\veps$.
	
	By construction, we can attach this extension to the $\veps$-collar connection given above, to obtain a manifold with corner having the correct boundary behaviour for the positive mass theorem with corners. In particular, we can apply Theorem \ref{thm-collar-glue-in} to obtain an admissible extension $(M,\widehat\gamma_{\mu})$ in $\mathcal{A}_2\( \S,g_1,H_1 \)$ with mass bound from above by $\m_B(\S,g_o,H_o)+\frac{2\mu}{3}$.
	
	All that remains is to scale this extension by $\zeta$ to obtain
	\be 
		\m_B(\S,g,H)=\zeta^{-1}\m_B(\S,g_1, H_1)<\zeta^{-1}\( \m_B(\S,g_o,H_o)+\frac{2\mu}{3} \).
	\ee 
	Now, recall that $\zeta$ can be made arbitrarily close to $1$ by shrinking $\delta$. So we restrict our choices of $\delta$, still depending only on $g_o,H_o,\mu$, to ensure
	
		\be 
		\m_B(\S,g,H)<\m_B(\S,g_o,H_o)+\mu,
		\ee 

for all $(g,H)\in\Gd\times\Hd$. That is, we have established upper semi-continuity of $\m_B$.\\

Now we simply must reverse the argument.\\

We can construct a $\veps$-collar connection from $(\S,g_o,H_o)$ to $g_1$, such that the Bartnik data on the other end of the collar is $\(\S,(1+\veps)^2g_1,\widehat{H}_1\)$ with $\widehat{H}_1>\frac{1}{1+\veps}H_1$. Now we take an admissible extension $(M,\tilde\gamma_{\veps,\mu})\in\mathcal{A}_2(\S,(1+\veps)^2g_1,\frac{1}{1+\veps}H_1)$ with ADM mass
\begin{align*} 
	\m_{ADM}(M,\tilde{\gamma}_{\veps,\mu})&<(1+\veps)\m_B(\S,g_1,H_1)+\frac{\mu}{3}\\
	&=(1+\veps)\zeta\m_B(\S,g,H)+\frac{\mu}{3}\\
	&<\m_B(\S,g,H)+O(\veps+|1-\zeta|)\sup\limits_{\Gd\times\Hd}\m_B+\frac{\mu}{3},
\end{align*}
where the supremum is the supremum of the Bartnik mass over the neighbourhood in consideration. Provided $\delta$ is sufficiently small, Lemma \ref{lem-unibound} implies this supremum is bound. Therefore we can take $\delta$ and $\veps$ to be small enough to ensure we have
\be 
	\m_{ADM}(M,\tilde{\gamma}_{\veps,\mu})<\m_B(\S,g,H)+\mu.
\ee

In particular, by again applying Theorem \ref{thm-collar-glue-in}, we obtain admissible extensions of $(\S,g_o,H_o)$ and obtain
\be 
	\m_B(\S,g_o,H_o)<\m_B(\S,g,H)+\mu,
\ee 
completing the proof.
\end{proof}

\vspace{7mm}


\begin{thebibliography}{10}
	
	\bibitem{ADM} Arnowitt, R.; Deser, S., and Misner, C. W., {\sl Coordinate invariance and energy expressions in general relativity}, Phys. Rev., {\bf 122} (1961), no. 3, 997--1006.
	
	\bibitem{Bartnik-86} Bartnik, R., {\sl The mass of an asymptotically flat manifold}, Comm. Pure Appl. Math.  \textbf{39} (1986), no. 5, 661--693.
	
	\bibitem{A-J} Anderson, M.; Jauregui, J., {\sl Embeddings, immersions and the Bartnik quasi-local mass conjectures}, arXiv preprint:  	arXiv:1611.08755.
	
	
	\bibitem{Bartnik-89} Bartnik, R., {\sl A new definition of quasilocal mass}, Phys. Rev. Lett.  \textbf{62} (1989), 2346.
	
	\bibitem{Bartnik-TsingHua} Bartnik, R., {\sl Energy in General Relativity}, Tsing Hua lectures on Geometry and Analysis (Hsinchu, 1990--1991), International Press, Cambridge MA (1997), 5--27.
	
	\bibitem{BCS-05} Beig, R.; Chru\'siel, P.; Schoen, R.
	{\sl KIDS are non-generic}, 
	Ann. Henri Poicar\'e, {\bf 6} (2005), 155--194.
	
	
	\bibitem{BMN11} Brendle, S.; Marques, F. C.; Neves, A.,
	{\sl Deformations of the hemisphere that increase scalar curvature}, 
	Invent. math, {\bf 185}, no. 1 (2011), 175--197.

	\bibitem{CCM} {Cabrera Pacheco}, A. J.; Cederbaum, C.; McCormick, S.,
	{\sl Asymptotically hyperbolic extensions and an analogue of the Bartnik mass}, preprint, arXiv:1802.03331.
	
	\bibitem{CCMM} {Cabrera Pacheco}, A. J.; Cederbaum, C.; McCormick, S.; Miao, P.,
	{\sl Asymptotically flat extensions of CMC Bartnik data}, 
	Class. Quantum Grav., {\bf 34} (2017), no. 10, 105001
	
	\bibitem{Chrusciel}
		Chru\'sciel,  P.,  {\sl Boundary conditions at spatial   infinity from a Hamiltonian point of view},
		Topological Properties and Global Structure of Space-Time, Plenum Press, New York, (1986), 49--59.
		
	
	\bibitem{Corvino-00} Corvino, J., {\sl Scalar curvature deformation and a gluing construction for the Einstein constraint equations}, Commun. Math. Phys.  \textbf{214}, (2000), 137--189.
	
	\bibitem{C-S-06} Corvino, J.; Schoen, R., {\sl On the asymptotics for the vacuum Einstein constraint 	equations}, J. Diff. Geom.  \textbf{73}, (2006), 185--217.
	
	\bibitem{Delay-11} Delay, E., {\sl Localized gluing of Riemannian metrics in interpolating their scalar curvature}, Diff. Geom. App.  \textbf{29}, 3, (2011), 433--439.
	
	
	\bibitem{H-I01} Huisken, G.; Ilmanen, T., {\sl The inverse mean curvature flow and the {R}iemannian {P}enrose inequality},
	J. Differential  Geom., {\bf 59} (2001), no. 3, 35$3$--437.
	
	\bibitem{J18}  Jauregui, J., 
		{\sl Smoothing the Bartnik boundary conditions and
			other results on Bartnik's quasi-local mass},  preprint, arXiv:1806.08348.
	
	\bibitem{L-S} Lin, C-Y.;  Sormani, C., {\sl Bartnik's mass and Hamilton's modified Ricci flow}, Ann. Henri Poincar\'e, {\bf 17} (2016), no. 10, 2783--2800.
	
	\bibitem{M-S} Mantoulidis, C.;  Schoen,  R., {\sl On the Bartnik mass of apparent horizons}, Class. Quantum Grav., {\bf 32} (2015), no. 20, 205002, 16pp.
	\bibitem{MM16} McCormick, S.; Miao, P.,  
	{\sl On a Penrose-like inequality in dimensions less than eight}, Int. Math. Res. Not., rnx181 (2017).
	\bibitem{Miao02}
	Miao, P., {\sl Positive mass theorem on manifolds admitting corners along a hypersurface}, Adv. Theor. Math. Phys., {\bf 6} (2002), no. 6, 116$3$--1182.
	
	\bibitem{M-X}  Miao, P.;  Xie, N.-Q., 
	{\sl On compact $3$-manifolds with nonnegative scalar curvature with a CMC boundary component},  preprint, arXiv:1610.07513.
	
	\bibitem{Reiris}  Reiris, M.,
		{\sl A note on scalar curvature and the convexity of boundaries},  preprint, arXiv:1209.4525.
	
	\bibitem{ShiTam02} Shi, Y.-G.;  Tam,  L.-F.,
	{\sl Positive mass theorem and the boundary behaviors of compact manifolds with nonnegative scalar curvature},
	J. Differential Geom. \textbf{62}  (2002),  no.1, 79--125.
	
	
	
\end{thebibliography}
\end{document}